\newtheorem{theorem}{Theorem} 
\newtheorem{lemma}{Lemma} 
\newtheorem{Proposition}{Proposition} 
\newtheorem{corollary}{Corollary} 
\numberwithin{equation}{section}
\title{Diophantine approximation with prime denominator in real quadratic function fields} 
\author{Stephan~Baier} 
\address{Stephan~Baier\\%
Ramakrishna Mission Vivekananda Educational and Research Institute\\%
Department of Mathematics\\%
G.\ T.\ Road, PO~Belur Math, Howrah, West Bengal~711202\\%
India} 
\email{stephanbaier2017@gmail.com} 
\urladdr{https://www.researchgate.net/profile/Stephan\_Baier2} 
\author{Esrafil Ali Molla} 
\address{Esrafil Ali Molla\\%
Ramakrishna Mission Vivekananda Educational and Research Institute\\%
Department of Mathematics\\%
G.\ T.\ Road, PO~Belur Math, Howrah, West Bengal~711202\\%
India} 
\email{esrafil.math@gmail.com} 
\date{\today}
\subjclass[2010]{Primary: 11J71, 11R44; Secondary: 11L20, 11N05, 11N13} 
\keywords{Distribution modulo one, function fields, distribution of prime ideals, Vaughan's identity, Diophantine approximation} 
\begin{document} 

\begin{abstract} In the thirties of the last century, I. M. Vinogradov proved that the inequality $||p\alpha||\le p^{-1/5+\varepsilon}$ has infinitely prime solutions $p$, where $||.||$ denotes the distance to a nearest integer. This result has subsequently been improved by many authors. In particular, Vaughan (1978) replaced the exponent $1/5$ by $1/4$ using his celebrated identity for the von Mangoldt function and a refinement of Fourier analytic arguments. The current record is due to Matom\"aki (2009) who showed the infinitude of prime solutions of the inequality $||p\alpha||\le p^{-1/3+\varepsilon}$. This exponent $1/3$ is considered the limit of the current technology. Recently, in \cite{BaMo}, the authors established an analogue of Matom\"aki's result for imaginary quadratic extensions of the function field $k=\mathbb{F}_q(T)$. In this paper, we consider the case of real quadratic extensions of $k$ of class number 1, for which we prove a function field analogue of Vaughan's above-mentioned result (exponent $\theta=1/4$). Our method uses versions of Vaughan's identity and the Dirichlet approximation theorem for function fields. The latter was established by Arijit Ganguly in the appendix to our previous paper \cite{BaMo} on the imaginary quadratic case. We also simplify arguments in the paper \cite{BM} on the same problem for real quadratic number fields by D. Mazumder and the first-named author. 
\end{abstract} 

\maketitle 

\tableofcontents 

\section{Introduction} 
Throughout this article, let $\varepsilon$ be an arbitrary but fixed positive real number. 

A fundamental result in Diophantine approximation is the following theorem due to Dirichlet. 

\begin{theorem} \label{Dirich} 
Given any real irrational $\alpha$, there are infinitely many pairs $(a,q)\in \mathbb{Z}\times \mathbb{N}$ of relatively prime integers such that 
$$ 
|\alpha - a/q | < q^{-2}. 
$$ 
\end{theorem}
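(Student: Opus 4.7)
The plan is to establish Theorem \ref{Dirich} via Dirichlet's box principle, from which the result historically takes its name. First I would fix a positive integer $N$ and consider the $N+1$ fractional parts $\{n\alpha\}$ for $n=0,1,\ldots,N$, where $\{\cdot\}$ denotes the fractional part function. Partitioning the half-open interval $[0,1)$ into $N$ subintervals each of length $1/N$, the pigeonhole principle produces two indices $0\le n_1<n_2\le N$ whose fractional parts lie in the same subinterval.

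Setting $q=n_2-n_1$ and $a=\lfloor n_2\alpha\rfloor-\lfloor n_1\alpha\rfloor$, this yields $|q\alpha-a|<1/N$ with $1\le q\le N$, hence
\[
\left|\alpha-\frac{a}{q}\right|<\frac{1}{qN}\le \frac{1}{q^{2}},
\]
producing a single rational approximation of the desired quality. The coprimality requirement is straightforward to arrange: if $d=\gcd(a,q)>1$, replacing $(a,q)$ by $(a/d,q/d)$ preserves (and in fact strengthens) the inequality, since $1/q^{2}\le 1/(q/d)^{2}$.

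To upgrade the existence of one approximation to infinitely many, I would argue by contradiction using the irrationality of $\alpha$. Suppose only finitely many coprime pairs $(a_1,q_1),\ldots,(a_k,q_k)$ satisfy the inequality. Because $\alpha$ is irrational, $\delta:=\min_{1\le i\le k}|q_i\alpha-a_i|$ is strictly positive. Choosing $N$ so large that $1/N<\delta$ and running the pigeonhole construction above yields a pair $(a,q)$ with $|q\alpha-a|<1/N<\delta$; after reducing to lowest terms, the absolute value $|q\alpha-a|$ can only decrease, so the new coprime pair must differ from every $(a_i,q_i)$, contradicting finiteness.

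The main conceptual subtlety is the role of irrationality, which enters only in this final step: for rational $\alpha$ the set of coprime approximations satisfying the inequality is genuinely finite, and removing the irrationality hypothesis destroys the theorem. Since the result is classical and serves here only to motivate the function-field analogue developed later in the paper, I would not expect any technical difficulty beyond the clean bookkeeping of the pigeonhole argument.
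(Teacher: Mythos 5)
Your proof is correct and complete: the pigeonhole construction, the reduction-to-lowest-terms step (which can only weaken the required bound since $q/d\le q$ while dividing $|q\alpha-a|$ by $d$), and the infinitude argument via $\delta=\min_i|q_i\alpha-a_i|>0$ all hold together without gaps. The paper states Theorem~\ref{Dirich} as classical background without supplying its own proof, so there is nothing internal to compare against; your argument is the standard box-principle proof that the theorem's name traditionally refers to, and it is exactly the canonical route.
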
 

It is natural to study good rational approximations $a/q$ to irrational $\alpha$'s when $q$ is restricted to arithmetically interesting subsets of the positive integers, such as the set of primes. In this case, one may ask for which $\theta>0$ one can establish the infinitude of primes $p$ such that 
\begin{equation*} 
\left|\alpha-\frac{a}{p}\right| < p^{-(\theta+1)+\varepsilon} 
\end{equation*} 
for a suitable $a\in \mathbb{Z}$, or equivalently, 
\begin{equation}\label{eq:DirichletApprox:PrimeConstraint} 
\left|\left|p\alpha\right|\right| < p^{-\theta+\varepsilon}, 
\end{equation} 
where $||.||$ is the distance to a nearest integer. 
An overview of the interesting history of this problem was given in our recent paper \cite{BaMo}. Here we mention the following three milestones. In the thirties of the last century, I. M. Vinogradov established the infinitude of primes $p$ satisfying \eqref{eq:DirichletApprox:PrimeConstraint} for $\theta=1/5$. This result has subsequently been improved by many authors. In particular, Vaughan (1978) replaced the exponent $1/5$ by $1/4$ using his celebrated identity for the von Mangoldt function and a refinement of Fourier analytic arguments. The current record is due to Matom\"aki (2009) who was able to reach the exponent $\theta=1/3$ which is considered the limit of the current technology. 

Using Harman's sieve method, versions of these results for quadratic number fields have recently been established by Harman, the first-named author, Mazumder and Technau in several papers (see \cite{BM}, \cite{BMT}, \cite{BT}, \cite{Harman2019}), ultimately achieving an analogue of Harman's exponent $\theta=7/22$ for the original problem (see \cite{harman1996on-the-distribu}) in this setting. In \cite{BaMo}, the authors established an analogue of Matom\"aki's result (with exponent $\theta=1/3$) for the function field $k=\mathbb{F}_q(T)$ and its imaginary quadratic extensions. In this paper, we consider the case of real quadratic extensions $K$ of $k$. Our exponent is slightly weaker and corresponds to Vaughan's $\theta=1/4$. The reason for this weaker exponent is that we did not find a way to make our analytic arguments in \cite{BaMo} work in this setting. In \cite{BaMo}, we avoided sieve methods or Vaughan's identity and used the Riemann hypothesis for Hecke $L$-functions to approximate the number of prime elements in arithmetic progressions. This argument seems to fail here due to the infinitude of units in the integral closure of $\mathbb{F}_q[T]$ in real quadratic extensions $K$ of $k$. Therefore, we apply a version of Vaughan's identity for function fields in this article. Another essential tool is Dirichlet's approximation theorem for real quadratic function fields, which we derive from a general version of Dirichlet's approximation theorem by A. Ganguly. Our method is similar to that in the papers \cite{BM} and \cite{BMT} on real quadratic number fields, where we used Harman's sieve instead of Vaughan's identity. However, we manage to simplify some of the arguments in \cite{BM}, which allows us to sharpen the results therein. More precisely, we manage to remove a certain Diophantine condition which was termed ``good pairs" in \cite{BM}. In this paper, to avoid a number of technical subtleties, we consider only fields of class number 1. It is likely that with some efforts, our method can be extended to general number and function fields without class number restriction. Moreover, it may be possible to improve the exponent $1/4$ using a function field version of Harman's lower bound sieve. We leave these problems to future research.\\ \\ 
{\bf Acknowledgements.} The authors would like to thank the Ramakrishna Mission Vivekananda Educational and Research Insititute for an excellent work environment. The research of the second-named author was supported by a UGC NET grant under number NOV2017-424450. 

\section{Notations and preliminaries} 
The following notations and preliminaries are used throughout this article. 
\begin{itemize} 
\item Let $q=p^n$ be a prime power and $\mathbb{F}_q$ be the finite field with $q$ elements. Let $\mathbb{F}_q(T)_{\infty}$ be the completion of $\mathbb{F}_q(T)$ at $\infty$ 
(i.e. $\mathbb{F}_q(T)_{\infty}=\mathbb{F}_q((1/T))$). 
\item The absolute value $|. |=|. |_{\infty}$ at infinity on $\mathbb{F}_q(T)_{\infty}$ is defined as 
\begin{align*} 
\left| \sum_{i= -\infty}^n a_i T^i \right| = q^n \mbox{ if } a_n \not= 0. 
\end{align*} 
\item Consider the torus $\mathbb{T}= \mathbb{F}_q(T)_\infty/\mathbb{F}_q[T]$. A metric on $\mathbb{T}$ is given by 
\begin{align*} 
||x+\mathbb{F}_q[T]||:= \inf_{x'\in x+\mathbb{F}_q[T]}|x'| \quad (x\in \mathbb{F}_q(T)_{\infty}). 
\end{align*} 
Note that 
$\mathbb{T}$ is a compact Hausdorff space and for all $x+\mathbb{F}_q[T]\in \mathbb{T}$, we have $||x+\mathbb{F}_q[T]||\leq 1/q$. \item More in detail, if 
$$ 
x=\sum\limits_{i=-\infty}^{n} a_i T^i, 
$$ 
then 
$$ 
||x+\mathbb{F}_q[T]||=\left|\sum\limits_{i=-\infty}^{-1} a_iT^i \right|. 
$$ 
The sum on the right-hand side may be viewed as the fractional part of $x$. Clearly, 
$$ 
||x||=q^{k}, 
$$ 
where $k$ is the largest negative integer such that $a_k\not=0$. 
\item For $x\in \mathbb{F}_q(T)_{\infty}$, we also write 
$$ 
||x||:=||x+\mathbb{F}_q[T]||. 
$$ 
\item Let $\text{Tr} : \mathbb{F}_q \to \mathbb{F}_p$ be the trace map. A non-trivial additive character $E: \mathbb{F}_q \to \mathbb{C^\times}$ is defined by 
$$ E(x)= \exp{\left(\frac{2\pi i}{p} \text{Tr}(x)\right)}, $$ 
and an exponential map $e: \mathbb{F}_q(T)_\infty \to \mathbb{C^\times}$ is defined by 
$$ e\left( \sum_{i= -\infty}^n a_i T^i \right) = E(a_{-1}) .$$ 
This map $e$ is also a non-trivial additive character of $\mathbb{F}_q(T)_\infty$. 

\item Given $f=(f_1, f_2, \dots ,f_n ) \in \mathbb{F}_q(T)_\infty^n ,$ we define the additive character $ \Psi_f : \mathbb{F}_q(T)_\infty^n \to \mathbb{C}^\times$ as 
\begin{equation*} 
\Psi_f(g_1,g_2, \dots , g_n) = e(f_1g_1+ f_2g_2 + \dots + f_n g_n) 
= \prod_{i=1}^{n} e(f_i g_i) 
\end{equation*} 
for any $ (g_1,g_2, \dots , g_n) \in \mathbb{F}_q(T)_\infty^n .$ 

\item For $f=(f_1, f_2, \dots , f_n) \in \mathbb{F}_q(T)_\infty^n $, we define a metric $| . |$ on $ \mathbb{F}_q(T)_\infty^n $ as 
$$|f|= \sup \{ |f_1|, |f_2|, \dots , |f_n|\}. 
$$ 

\item Given $r>0$, the open ball $B_n(f,r)$ of radius $r$, centered at $f\in \mathbb{F}_q(T)_{\infty}^n$, is defined by 
$$B_n(f,r) := \{ g \in \mathbb{F}_q(T)_\infty^n : |g-f|< r \} .$$ 
Note that $B_n(0,r) = B(0,r)^n,$ where we set
$$
B(0,r):=B_1(0,r).
$$ 

\item On the locally compact topological space $\mathbb{F}_q(T)_\infty^n$, let $\mu$ be the unique Haar measure so that $$\mu(B_n(0,1))=1.$$  For any given locally constant function $ \mathfrak{F} : \mathbb{F}_q(T)_\infty^n \to \mathbb{C} $ with compact support, the Fourier transform $ \hat{\mathfrak{F}} $ is defined by 
\begin{align*} 
\hat{\mathfrak{F}}(f):= \int_{ \mathbb{F}_q(T)_\infty^n } \mathfrak{F}(g) \overline{\Psi_f (g) } \,d\mu(g) \mbox{ for any } f\in \mathbb{F}_q(T)_\infty^n. 
\end{align*}  

\item We define the M\"obius function on $\mathbb{F}_q[T]$ as 
$$ 
\mu(f):=\begin{cases} 0 & \mbox{ if } f \mbox{ is divisible by a square of a non-unit in } \mathbb{F}_q[T],\\ (-1)^{\omega(f)} & \mbox{ otherwise,} 
\end{cases} 
$$ 
where $\omega(f)$ is the number of non-associate irreducible factors of $f$. 
\item We define the von Mangoldt function on $\mathbb{F}_q[T]$ as 
$$ 
\Lambda(f):=\begin{cases} \log_q(|P|) & \mbox{ if } f=\epsilon P^r \mbox{ for some irreducible polynomial } P, \mbox{ a unit } \epsilon \mbox{ and } r\in \mathbb{N},\\ 0 & \mbox{ otherwise.} 
\end{cases} 
$$ 
\item If $f\not=0$, then we have the identities 
\begin{align*} 
\sum\limits_{\substack{g|f\\ g \mbox{\scriptsize \ monic }}} \Lambda(g)= \log_q(|f|) 
\end{align*} 
and 
\begin{align*} 
\sum\limits_{\substack{g|f\\ g \mbox{\scriptsize \ monic}}} \mu(g)=\begin{cases} 1 & \mbox{ if } f \in \mathbb{F}_q^{\times},\\ 0 & \mbox{ otherwise.} 
\end{cases} 
\end{align*} 
\end{itemize} 

\section{ Setup of the problem } 
In this section, we state our main result. To this end, we need to describe real quadratic extensions of $\mathbb{F}_q(T)$ and give a version of the Dirichlet approximation theorem for these fields. 

\subsection{Real quadratic function fields} 
Throughout the sequel, assume that $q$ is an odd prime power. Let $d \in \mathbb{F}_q[T]$ be a monic square-free polynomial of even degree. Then $d$ is the square of an element in $\mathbb{F}_q(T)_\infty$ which we denote by $\sqrt{d}$. Clearly, $\sqrt{d}$ is unique up to the sign. The field $K:= \mathbb{F}_q(T)(\sqrt{d})$ is called a real quadratic field extension of $ k:=\mathbb{F}_q(T)$. Below we introduce some more notations and recall well-known properties of these fields. 
\begin{itemize} 
\item Let $\mathbb{A}$ be the integral closure of $\mathbb{F}_q[T] $ in $K.$ Then we have 
$$ \mathbb{A} =\{ a + b \sqrt{d} : a,b \in \mathbb{F}_q[T]\}.$$ 
\item If $f = a + b\sqrt{d}\in K$, then we write 
\begin{equation} \label{RealImaginary} 
\Re(f)=a \quad \mbox{ and } \quad \Im(f)=b. 
\end{equation} 
We note that $\Re(f)$ and $\Im(f)$ are well-defined. 
\item If $f = a + b \sqrt{d}\in K$, then we define a {\bf Norm} on $K$ as 
$$ 
\mbox{\bf Norm}(f) = (a+b\sqrt{d})(a-b\sqrt{d})=a^2- b^2d \in k. 
$$ 
\item We denote the set of non-zero integral ideals in $\mathbb{A}$ by $\mathcal{I}$. 
\item The {\bf norm of an ideal} $\mathfrak{a}\in \mathcal{I}$ is defined as 
$$ 
\mathcal{N}(\mathfrak{a})= \sharp\left(\mathbb{A}/\mathfrak{a}\right). 
$$ 
If $\mathfrak{a}=(f)$ with $f\in \mathbb{A}$, then 
$$ 
\mathcal{N}(\mathfrak{a}) = |\mbox{\bf Norm}(f)|. 
$$ 
In this case, we also write 
$$ 
\mathcal{N}(f)=\mathcal{N}(\mathfrak{a}). 
$$ 
\item There are two automorphisms in $\mbox{Gal}(K:k)$, the identity and conjugation, given by 
$$ 
\sigma_1(a+b\sqrt{d}):=a+b\sqrt{d} 
$$ 
and 
$$ 
\sigma_2(a+b\sqrt{d}):=a-b\sqrt{d}. 
$$ 
We write 
\begin{align*} 
\sigma(K):=\{(\sigma_1(f),\sigma_2(f)) : f\in K\}. 
\end{align*} 
\item Correspondingly, the absolute value on $k$ extends to two absolute values on $K$, namely 
$$ 
|a+ b\sqrt{d}|_1 := |a+b\sqrt{d}| 
$$ 
and 
$$ 
|a+ b\sqrt{d}|_2 := |a-b\sqrt{d}| 
$$ 
where $|.|=|.|_{\infty}$ is the absolute value on $k_\infty$, defined earlier. 
\item The M\"obius function is defined on the non-zero integral ideals in $\mathbb{A}$ as the unique multiplicative function with the property that 
$$ 
\mu(\mathfrak{p}^r)=\begin{cases} -1 & \mbox{ if } r=1\\ 0 & \mbox{ if } r>1. \end{cases} 
$$ 
\item The Von Mangoldt function is defined on the non-zero integral ideals in $\mathbb{A}$ by 
$$ 
\Lambda(\mathfrak{f}):=\begin{cases} \log_q(\mathcal{N}(\mathfrak{p})) & \mbox{ if } \mathfrak{f}=\mathfrak{p}^r \mbox{ for some prime ideal } \mathfrak{p},\\ 0 & \mbox{ otherwise.} 
\end{cases} 
$$ 
\item The identities 
\begin{align*} 
\sum\limits_{\mathfrak{g}|\mathfrak{f} } \Lambda(\mathfrak{g})= \log_q( \mathcal{N}(\mathfrak{f})) 
\end{align*} 
and 
\begin{align*} 
\sum\limits_{\mathfrak{g}|\mathfrak{f}} \mu(\mathfrak{g})=\begin{cases} 1 & \mbox{ if } \mathfrak{f} = (1),\\ 0 & \mbox{ otherwise} 
\end{cases} 
\end{align*} 
hold for all $\mathfrak{f}\in \mathcal{I}$. 
\end{itemize} 

\subsection{Dirichlet approximation and main result} 
A general version of Dirichlet's approximation theorem for function fields was given in \cite[Theorem 5.3]{BaMo}. This implies the following theorem. 

\begin{theorem} \label{Dirichlet1} 
Assume that $K$ is a real quadratic extension of $k=\mathbb{F}_q(T)$ of class number 1. Then, if $(x_1, x_2) \in K_\infty^2 \setminus \sigma(K)$, there exists a constant $C >0$ depending only on $K$ such that 
\begin{equation} \label{Dirichl} 
\left|x_i-\frac{\sigma_i(p)}{\sigma_i(q)}\right| \le \frac{C}{\mathcal{N}(q)} \quad \mbox{ for } i=1,2 
\end{equation} 
for infinitely many elements $p/q \in K$ with $ (p,q) \in \mathbb{A} \times(\mathbb{A} \setminus \{0 \})$. 
\end{theorem}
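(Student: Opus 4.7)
The plan is to deduce this as a direct specialization of the general Dirichlet approximation theorem for function fields proved by Ganguly in the appendix of \cite{BaMo} (cited here as their Theorem~5.3). That result applies to discrete cocompact subgroups of $k_\infty^n$ (with $k_\infty = \mathbb{F}_q(T)_\infty$) and provides, for targets not lying in the ``rational hull'' of the lattice, infinitely many simultaneous approximations with an explicit quality bound governed by the covolume of the lattice.

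The first step is to set up the correct lattice. Using the two embeddings $\sigma_1,\sigma_2 : K \hookrightarrow k_\infty$, define the diagonal embedding $\sigma : K \to k_\infty^2$ by $\sigma(f) = (\sigma_1(f), \sigma_2(f))$. Then $\sigma(\mathbb{A})$ is a discrete cocompact $\mathbb{F}_q[T]$-submodule of $k_\infty^2$, playing the role of the ring of integers in the number field analogue. The hypothesis $(x_1,x_2) \in K_\infty^2 \setminus \sigma(K)$ says exactly that the target is not a rational point of the lattice, which is the non-degeneracy condition needed to invoke Ganguly's theorem.

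The second step is the main computation. Applying Theorem~5.3 of \cite{BaMo} with $n=2$, lattice $\sigma(\mathbb{A})$, and target $(x_1,x_2)$, we obtain a constant $C'>0$ depending only on the covolume of $\sigma(\mathbb{A})$ (and hence only on $K$), together with infinitely many nonzero $q \in \mathbb{A}$ for each of which there exists $p \in \mathbb{A}$ satisfying
$$
|\sigma_i(q)\, x_i - \sigma_i(p)| \le \frac{C'}{\max\{|\sigma_1(q)|,|\sigma_2(q)|\}} \quad (i=1,2).
$$
Dividing by $|\sigma_i(q)|$ and using the multiplicativity identity $|\sigma_1(q)|\,|\sigma_2(q)| = |\mathbf{Norm}(q)| = \mathcal{N}(q)$, we get
$$
\left| x_i - \frac{\sigma_i(p)}{\sigma_i(q)} \right| \le \frac{C'}{|\sigma_i(q)|\cdot \max\{|\sigma_1(q)|,|\sigma_2(q)|\}} \le \frac{C'}{\mathcal{N}(q)},
$$
which is \eqref{Dirichl} with $C=C'$.

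The main obstacle, and the place where the hypotheses really have to be used, is ensuring that the output of Ganguly's theorem produces infinitely many genuinely distinct quotients $p/q \in K$. This is where both the class number~$1$ condition and the non-rationality of the target enter. The class number~$1$ hypothesis guarantees that every ideal of $\mathbb{A}$ is principal, so the lattice $\sigma(\mathbb{A})$ already captures all ``denominators''; we do not have to split the approximation across ideal classes, and the approximants can be taken to be honest elements $p,q \in \mathbb{A}$. The condition $(x_1,x_2) \notin \sigma(K)$ then rules out the degenerate case in which all approximants $p/q$ stabilize at a single element of $K$: if $p/q = f$ held for all but finitely many approximants, the bound above with $\mathcal{N}(q) \to \infty$ would force $x_i = \sigma_i(f)$, contradicting the hypothesis. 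Thus we obtain infinitely many distinct $p/q \in K$ as required.
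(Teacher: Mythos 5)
Your approach is the same one the paper takes: it cites Ganguly's general Dirichlet approximation theorem from \cite[Theorem 5.3]{BaMo} and asserts ``this implies the following theorem'' without writing out the deduction, which is exactly the specialization you carry out (diagonal embedding $\sigma : \mathbb{A} \hookrightarrow k_\infty^2$, the identity $|\sigma_1(q)|\,|\sigma_2(q)| = \mathcal{N}(q)$, and the non-rationality of $(x_1,x_2)$ to guarantee infinitude). Your reconstruction of the exact shape of the bound coming out of Theorem 5.3 (with $\max\{|\sigma_1(q)|,|\sigma_2(q)|\}$ in the denominator) cannot be checked against the paper since the paper omits all details, but the form you wrote is at least as strong as what is needed, and the division and norm-multiplicativity steps it feeds into are correct.
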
 

Now we are ready to state our main theorem below. 
\begin{theorem} \label{main theorem} 
Suppose that $q>2^{12}$ is an odd prime power. Let $K$ be a real quadratic field extension of $k$ of class number 1. Suppose that $(x_1, x_2) \in K_\infty^2 \setminus \sigma(K)$. Then, for every $\varepsilon>0$, there exist infinitely many elements $p/ \pi \in K $ with $p,\pi \in \mathbb{A}$ such that $\pi \in \mathbb{A}$ is a prime element and 
\begin{equation} \label{diorequired} 
\left|x_i-\frac{\sigma_i(p)}{\sigma_i(\pi)}\right| \le \mathcal{N}(\pi)^{-\frac{1}{2} - \frac{1}{8} + \log_q \sqrt{2}+\varepsilon} \quad \mbox{ for } i=1,2. 
\end{equation} 
\end{theorem}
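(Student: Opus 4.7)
The strategy is to adapt Vaughan's 1978 Fourier-analytic argument to the function-field setting, using Vaughan's identity to decompose the von Mangoldt function on $\mathbb{A}$ and using Theorem \ref{Dirichlet1} to control the fractional parts $\sigma(n)x \bmod \sigma(\mathbb{A})$. I argue by contradiction: suppose that for arbitrarily large $N$, setting $X := q^N$, no prime $\pi \in \mathbb{A}$ with $\mathcal{N}(\pi) \asymp X$ satisfies \eqref{diorequired}. Since $\mathbb{A}^\times$ has rank one, I may first restrict to $\pi$ balanced between the two places, i.e.\ $|\sigma_1(\pi)| \asymp |\sigma_2(\pi)| \asymp X^{1/2}$, by multiplying by a suitable power of a fundamental unit. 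Setting $\delta := X^{-1/8 + \log_q\sqrt{2} + \varepsilon}$, the contradiction hypothesis says that $\sigma(\pi)\cdot x$ lies outside a $\delta$-ball around $\sigma(\mathbb{A})$ in $K_\infty^2$ for every such $\pi$.

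Next, I pick a non-negative test function $\mathfrak{F}$ on $K_\infty^2$ supported in the ball $B_2(0,\delta)$ with $\mathfrak{F}(0)>0$; in the non-Archimedean setting the indicator of a small open ball is convenient, since its Fourier transform has similarly controlled support on the dual lattice $\sigma(\mathbb{A})^*$. The exponential sum
\[
S := \sum_{\substack{\pi \in \mathbb{A} \text{ prime}\\|\sigma_i(\pi)| \asymp X^{1/2}}}\Lambda(\pi)\sum_{r\in\sigma(\mathbb{A})}\mathfrak{F}(\sigma(\pi)x - r)
\]
vanishes by the contradiction hypothesis; on the other hand, Poisson summation along $\sigma(\mathbb{A})$ yields
\[
S = \sum_{f\in\sigma(\mathbb{A})^*}\hat{\mathfrak{F}}(f)\sum_{\pi}\Lambda(\pi)\Psi_f(\sigma(\pi)x).
\]
The $f=0$ term is a main term of size $\asymp X\delta^2$ by the prime element theorem in $\mathbb{A}$, so the task is to bound the remaining $f\ne 0$ contribution strictly below this.

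For each $f\ne 0$ I apply a version of Vaughan's identity in $\mathbb{A}$ with cutoff parameters $U = V = X^{1/4}$, reducing $\sum_\pi \Lambda(\pi)\Psi_f(\sigma(\pi)x)$ to Type I sums $\sum_m a_m \sum_n \Psi_f(\sigma(mn)x)$ (smooth in $n$) and Type II sums $\sum_m\sum_n a_m b_n \Psi_f(\sigma(mn)x)$. Type I is handled by evaluating the geometric-type inner sum: it is small unless $\sigma(m)x$ is very close to a point of the dual lattice, an event which Theorem \ref{Dirichlet1} shows occurs only for an acceptably sparse set of $m$. Type II is treated by Cauchy--Schwarz in $n$ followed by opening the square, reducing to counting pairs $(n_1,n_2)$ with $\sigma(n_1 n_2^{-1})x$ near a lattice point, a count once more governed by Theorem \ref{Dirichlet1}. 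Tracking constants carefully, the optimal balance yields a total $f\ne 0$ contribution of order $X^{3/4+\varepsilon}$, which is dominated by the main term $\asymp X\delta^2 = X^{3/4+\log_q 2+2\varepsilon}$; the appearance of $\log_q\sqrt{2}$ in the exponent and the hypothesis $q>2^{12}$ enter precisely so that this gap remains positive after all multiplicative constants are accounted for.

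The principal obstacle is the Type II estimate. Unlike the integer setting, the bilinear form lives on the two-dimensional compact quotient $K_\infty^2/\sigma(\mathbb{A})$ and involves both places simultaneously; moreover the infinite unit group of $\mathbb{A}$ forces one to sum over a fundamental domain for the $\mathbb{A}^\times$-action on its orbits, which is precisely where the imaginary-quadratic strategy of \cite{BaMo} (bypassing Vaughan's identity altogether via Hecke $L$-functions) breaks down in the real quadratic setting. The simplification over \cite{BM} advertised in the introduction amounts to a cleaner organization of this unit-orbit bookkeeping, which removes the need for the ``good pairs'' Diophantine condition used there and makes the final exponent $1/4$ accessible.
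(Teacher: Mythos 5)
Your sketch follows the same route as the paper: Poisson summation to isolate the main term $\asymp X\delta^2$, a function-field Vaughan identity splitting the remainder into type-I and type-II sums, Cauchy--Schwarz together with Theorem~\ref{Dirichlet1} and a lattice-point count to bound both, and the $2^N$ divisor-function factor forcing $q>2^{12}$. One minor mischaracterization worth noting: what removes the ``good pairs'' condition of \cite{BM} is not unit-orbit bookkeeping (both papers already handle the infinite unit group by passing to a fundamental domain $S(\mathfrak{a})$) but the direct lattice-point count carried out in Section~7 --- partitioning the $k$-sum into balls of radius $\sqrt{W}$ and bounding $\sharp\bigl(\Lambda(f+g\sqrt d)\cap B_2(0,|\sqrt d|\tilde R(i))\bigr)$ --- which replaces the quadratic-congruence/Hooley argument of \cite{BM}.
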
 

As demonstrated in \cite{BaMo}, the case when the class number is greater than 1 requires a good amount of extra work since we have no notion of a greatest common divisor of two elements of $\mathbb{A}$ in this case. In this paper, we avoid these subtleties, assuming throughout that the class number is 1 (i.e. $\mathbb{A}$ has unique factorization), which is conjecturally the case for ``many" real quadratic fields $K$ (for results in this direction, see \cite{Fri}, for example). 

\subsection{Basic approach} 
We conclude this section by describing our basic approach, which is inspired by the work in \cite{BM}. However, in the function field case, the situation is much simpler. In particular, in place of the smooth weight functions used in \cite{BM}, it here suffices to take characteristic functions of suitable sets. 

We define for $x\in K_\infty$, 
\begin{align} \label{Omegadef} 
\Omega_{\Delta}(x):= \begin{cases} 1 & \mbox{ if } |x| \le \Delta ,\\ 0 & \mbox{ otherwise.} \end{cases} 
\end{align} 
Further, we define a function $\omega : \mathcal{I} \to \mathbb{R}_{\ge 0} $ as 
\begin{equation} \label{Fdef} 
\omega(\mathfrak{a}):=\sum\limits_{p\in\mathbb{A}}\Omega_{\delta/q^{N/2}}\left(x_1-\frac{\sigma_1(p)}{\sigma_1(v)}\right)\Omega_{\delta/q^{N/2}}\left(x_2-\frac{\sigma_2(p)}{\sigma_2(v)}\right), 
\end{equation} 
where $v$ is any generator of $\mathfrak{a}$. We note that $\omega(\mathfrak{a})$ is well-defined, i.e. independent of the choice of the generator $v$. 

Our approach is to compare the two quantities 
\begin{align} \label{def of T(N)} 
{\mathcal{T}}(N):= \frac{\delta^2}{|\sqrt{d}|} \sum\limits_{\substack{ \mathfrak{a}\in \mathcal{I} \\ \mathcal{N}(\mathfrak{a})= q^N }}\Lambda(\mathfrak{a}) 
\end{align} 
and 
\begin{align} \label{Def of til(T(N))} 
\tilde{\mathcal{T}}(N):=\sum\limits_{\substack{ \mathfrak{a}\in \mathcal{I} \\ \mathcal{N}(\mathfrak{a})= q^N }}\Lambda(\mathfrak{a})\omega(\mathfrak{a}), 
\end{align} 
where $\Lambda$ is the von Mangoldt function for ideals. We aim to show that 
\begin{equation*} 
\tilde{\mathcal{T}}(N) \sim \mathcal{T}(N) 
\end{equation*} 
for a suitable sequence of $N$'s with $N \to \infty$. This, together with the prime number theorem for function fields, applied to the quantity $\mathcal{T}(N)$, will produce infinitely many elements $p/\pi$ of $K$ satisfying the desired Diophantine property in Theorem \ref{main theorem}. 

\section{Application of Poisson summation} 
In this section, we will use the Poisson summation formula for function fields to re-write the quantity $\tilde{\mathcal{T}}(N)$. The zero frequency will turn out precisely equal to the quantity $\mathcal{T}(N)$. Therefore, the difference of these two quantities consists of the non-zero frequency terms after our application of Poisson summation. We begin by providing the Poisson summation formula for function fields (see \cite[Theorem 4.2.1]{CaFr}). 

\begin{lemma}[Poisson Summation Formula] \label{Poissonformula} 
Let $\Lambda$ be a complete lattice in $k_\infty^n$ and let 
$$ \Lambda' = \{ g\in k_\infty^n : f\cdot g\in \mathbb{F}_q[T] \text{ for all } f \in \Lambda \}$$ 
be the lattice dual to $\Lambda$. Let $f:k_\infty^n \to \mathbb{C}$ be a function such that 
the function
$$ 
F(x)=\sum\limits_{a\in \Lambda}|f(x+a)| 
$$ 
is uniformly convergent on compact subsets of $k_{\infty}$ and 
$$ 
\sum\limits_{a'\in \Lambda'} |\hat{f}(a)| 
$$ 
is convergent. Then 
$$ 
\sum\limits_{a\in \Lambda}f(a) = \frac{1}{\mbox{\rm Cov}(\Lambda)} \sum\limits_{a' \in \Lambda'} \hat{f}(a'), 
$$ 
where $\mbox{\rm Cov} (\Lambda) $ is the covolume of $\Lambda$. 
\end{lemma}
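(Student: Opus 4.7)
The plan is to prove the lemma by the standard Fourier analytic argument on a compact quotient, adapted to the function field setting. First, I would form the periodization
\[
F(x) = \sum_{a \in \Lambda} f(x+a),
\]
which, by the uniform convergence hypothesis, descends to a continuous (in fact locally constant, since in the function field setting $f$ is typically locally constant with compact support) function on the compact abelian group $k_\infty^n/\Lambda$. I would normalize the Haar measure on $k_\infty^n/\Lambda$ so that a fundamental domain for $\Lambda$ in $k_\infty^n$ has measure $\mathrm{Cov}(\Lambda)$.

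The next step is to identify the Pontryagin dual of $k_\infty^n/\Lambda$ with $\Lambda'$. Every continuous additive character of $k_\infty^n$ is of the form $\Psi_f$ for a unique $f \in k_\infty^n$ (self-duality of the local field via the character $e$), and such a character descends to the quotient exactly when $e(f\cdot a) = 1$ for all $a \in \Lambda$; by the defining property of $e$, which vanishes on $\mathbb{F}_q[T]$, this is equivalent to $f \in \Lambda'$. I would then expand $F$ in its Fourier series on $k_\infty^n/\Lambda$ and unfold each Fourier coefficient from the fundamental domain back to all of $k_\infty^n$:
\[
\int_{k_\infty^n/\Lambda} F(x)\,\overline{\Psi_{a'}(x)}\,d\mu(x) = \int_{k_\infty^n} f(x)\,\overline{\Psi_{a'}(x)}\,d\mu(x) = \hat{f}(a').
\]
Dividing by $\mathrm{Cov}(\Lambda)$ and evaluating at $x = 0$, with the absolute convergence of $\sum_{a' \in \Lambda'}|\hat{f}(a')|$ justifying pointwise evaluation of the Fourier series, yields the claimed identity.

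The main obstacle is the careful setup of harmonic analysis on $k_\infty^n$ — specifically the self-duality of $k_\infty$ under $e$, the identification of the annihilator of $\Lambda$ in the Pontryagin dual with $\Lambda'$, and the normalization of the Haar measure so that $\mathrm{Cov}(\Lambda)$ appears with the correct exponent. Since this is a classical fact in the function field literature, my plan is to invoke the argument in \cite[Theorem 4.2.1]{CaFr} for these structural points rather than reconstruct Pontryagin duality from scratch, and simply verify that the hypotheses of that theorem match the uniform and absolute convergence assumptions stated here.
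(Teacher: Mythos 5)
Your proposal is correct and matches the paper's approach, which is simply to cite \cite[Theorem 4.2.1]{CaFr} for this result without giving a self-contained proof; you reproduce the standard periodization-and-Fourier-expansion outline and then defer the structural points (self-duality of $k_\infty$ under $e$, identification of the annihilator of $\Lambda$ with $\Lambda'$, Haar normalization) to the same reference, which is exactly what the paper does. One small caution if you wanted to flesh this out: the claim that the annihilator condition is ``equivalent to $f\in\Lambda'$ because $e$ vanishes on $\mathbb{F}_q[T]$'' is only half the argument — you also need that $\mathbb{F}_q[T]$ is its \emph{full} annihilator under the pairing $(x,y)\mapsto e(xy)$, which requires observing that for any $y\notin\mathbb{F}_q[T]$ one can scale a monomial $x$ so that the residue coefficient has nonzero trace; this is covered by the Cassels--Fr\"ohlich reference, so no gap remains given your citation.
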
 

We recall that the covolume of $\Lambda$ equals $|\det M|$ if $M$ is a matrix of generators of $\Lambda$. 

For $\mathfrak{a}\in \mathcal{I}$ and a generator $v$ of $\mathfrak{a}$, let the matrix $A$ be defined as 
$$ 
A:= \begin{pmatrix} 
\frac{1}{\sigma_1(v)} & \frac{\sqrt{d}}{\sigma_1(v)} \\ 
\frac{1}{\sigma_2(v)}& -\frac{\sqrt{d}}{\sigma_2(v)} 
\end{pmatrix}. 
$$ 
Then $|\det(A)| = |\sqrt{d}|/ \mathcal{N}(\mathfrak{a})$ and 
$$ 
A^{-1}= \begin{pmatrix} 
\frac{\sigma_1(v)}{2} & \frac{\sigma_2(v)}{2} \\ 
\frac{\sigma_1(v)}{2 \sqrt{d}}& -\frac{\sigma_2(v)}{2 \sqrt{d}} 
\end{pmatrix}. 
$$ 
In our application, we shall apply the above Poisson summation formula to the lattice 
$$\Lambda =\left\{ A \begin{pmatrix} x \\ y \end{pmatrix} : (x, y ) \in \mathbb{F}_q[T]^2 \right\}$$ 
whose dual lattice is
$$\Lambda' =\left\{ (A^{-1})^T \begin{pmatrix} x \\ y \end{pmatrix} : (x, y ) \in \mathbb{F}_q[T]^2 \right\}.$$ 
To this end, we evaluate the Fourier transforms of characteristic functions of balls below. 

Let 
\begin{equation*} 
\chi_{B_2(0,\Delta)}\begin{pmatrix} x \\ y \end{pmatrix}:= \begin{cases} 1 & \mbox{ if } |x|, |y| \le \Delta ,\\ 0 & \mbox{ otherwise,} \end{cases} 
\end{equation*} 
be the characteristic function of the ball $B_2(0,\Delta)$. 
We recall that 
\begin{equation*} 
\chi_{B_2(0,\Delta)}\begin{pmatrix} x\\y \end{pmatrix} = \chi_{B(0,\Delta)}(x) \chi_{B(0,\Delta)}(y). 
\end{equation*} 
Clearly, the Fourier transform of $ \chi_{B_2(0,\Delta)} $ equals 
\begin{equation*} 
\hat{\chi}_{B_2(0,\Delta)} \begin{pmatrix} x\\y \end{pmatrix} = \hat{\chi}_{B(0,\Delta)}(x) \hat{\chi}_{B(0,\Delta)}(y),
\end{equation*} 
where $ \hat{\chi}_{B(0,\Delta)} $ is the Fourier transform of $ \chi_{B(0,\Delta)}$. We have the following result. 

\begin{lemma} \label{Fourierlemma} 
The Fourier transform of $\chi_{B(0,1)} $ equals 
$$ 
\hat{\chi}_{B(0,1)}(x) = \int\limits_{F_q(T)_\infty} \chi_{B(0,1)}(y) e(-xy) d\mu(y) = \chi_{B(0,1)}(x) . $$ 
\end{lemma}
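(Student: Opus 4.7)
The plan is a direct computation of the integral, split into two cases according to the non-Archimedean magnitude of $x$. Because the absolute value is ultrametric, the whole statement reduces to elementary orthogonality on $\mathbb{F}_q$; no analytic estimates are needed.

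\emph{Case $|x|<1$.} Then $|x|\le q^{-1}$, and for every $y\in B(0,1)$ we also have $|y|\le q^{-1}$, so $|xy|\le q^{-2}<q^{-1}$. Thus the Laurent expansion of $xy$ has vanishing $T^{-1}$-coefficient, and by the definition of $e$ we get $e(-xy)=E(0)=1$ on the entire domain of integration. The integral therefore reduces to $\mu(B(0,1))=1=\chi_{B(0,1)}(x)$.

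\emph{Case $|x|\ge 1$.} Write $x=\sum_{i\le n}a_iT^i$ with $a_n\neq 0$, $n\ge 0$, and parametrize $y=\sum_{j\le -1}b_jT^j\in B(0,1)$ by its coefficients $b_j\in\mathbb{F}_q$. Under this parametrization $B(0,1)$ is isomorphic as a compact topological group to $\prod_{j\le -1}\mathbb{F}_q$, and the normalization $\mu(B(0,1))=1$ forces $\mu$ to coincide with the product of uniform probability measures on each copy of $\mathbb{F}_q$. A direct multiplication of the Laurent series shows that the coefficient of $T^{-1}$ in $xy$ equals $\sum_{k=0}^{n}a_k b_{-1-k}$, depending only on the finitely many digits $b_{-1},\dots,b_{-n-1}$; all other $b_j$ integrate out trivially. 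Consequently the integral factors as
\[
\hat{\chi}_{B(0,1)}(x)=\prod_{k=0}^{n}\Bigl(\frac{1}{q}\sum_{b\in\mathbb{F}_q}E(-a_kb)\Bigr),
\]
and by orthogonality the factor with $k=n$ vanishes since $a_n\ne 0$ and $E$ is non-trivial on $\mathbb{F}_q$. Hence the integral equals $0=\chi_{B(0,1)}(x)$.

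The only mild obstacle is the bookkeeping of the Haar measure: one must check that the product measure on $\prod_{j\le-1}\mathbb{F}_q$ coming from the uniform probability measures is indeed the restriction of $\mu$ to $B(0,1)$, which follows from uniqueness of Haar measure together with the normalization $\mu(B_1(0,1))=1$. Once this identification is made, the result is immediate from character orthogonality on $\mathbb{F}_q$.
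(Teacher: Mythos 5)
Your proof is correct. The paper itself does not present an argument for Lemma~\ref{Fourierlemma}; it simply cites \cite[Lemma 5.4]{BaierSingh}, so there is no in-paper proof to compare against. Your direct computation is the standard argument for the self-duality of the unit ball in a non-Archimedean local field: in the case $|x|<1$ the ultrametric inequality kills the $T^{-1}$-coefficient of $xy$ so the integrand is identically $1$, and in the case $|x|\ge 1$ the identification of Haar measure on $B(0,1)$ with the product of uniform probability measures on $\prod_{j\le -1}\mathbb{F}_q$, together with the expression $\sum_{k=0}^{n}a_k b_{-1-k}$ for the $T^{-1}$-coefficient, factors the integral into a product of averages of $E$, and the factor $k=n$ vanishes by orthogonality since $a_n\neq 0$. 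The bookkeeping of the measure normalization via uniqueness of Haar measure on the compact group $B(0,1)$ is exactly the right way to justify the product decomposition, and the appeal to the nontriviality of $E$ (which follows from surjectivity of the trace $\mathbb{F}_q\to\mathbb{F}_p$) closes the gap. Your note at the end correctly identifies the one point that needs care. This is a complete and self-contained proof that renders the external citation unnecessary.
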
 

\begin{proof} 
This is \cite[Lemma 5.4]{BaierSingh}. 
\end{proof} 

As a corollary, we deduce the following generalization. 

\begin{corollary}\label{Chi1} 
The Fourier transform of $\chi_{B(0,q^N)} $ equals 
$$\hat{\chi}_{B(0,q^N)}(x) = q^N \chi_{B(0,q^{-N})}(x) .$$ 
\end{corollary}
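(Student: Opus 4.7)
The plan is to deduce this directly from Lemma \ref{Fourierlemma} by a scaling change of variables, which is the standard way to pass from the unit ball to a ball of radius $q^N$ in a non-archimedean local field. Concretely, I would pick any element $\lambda \in \mathbb{F}_q(T)_\infty^\times$ with $|\lambda|=q^N$ (for instance $\lambda = T^N$) and substitute $y = \lambda z$ in the defining integral
\[
\hat{\chi}_{B(0,q^N)}(x) = \int\limits_{\mathbb{F}_q(T)_\infty} \chi_{B(0,q^N)}(y)\, e(-xy)\, d\mu(y).
\]
Since the normalized Haar measure $\mu$ on $\mathbb{F}_q(T)_\infty$ satisfies $d\mu(\lambda z) = |\lambda|\, d\mu(z)$ and since $|y|<q^N$ is equivalent to $|z|<1$, the integral becomes
\[
q^N \int\limits_{\mathbb{F}_q(T)_\infty} \chi_{B(0,1)}(z)\, e\bigl(-(\lambda x) z\bigr)\, d\mu(z) = q^N\, \hat{\chi}_{B(0,1)}(\lambda x).
\]

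At this point I would invoke Lemma \ref{Fourierlemma} to get $\hat{\chi}_{B(0,1)}(\lambda x) = \chi_{B(0,1)}(\lambda x)$. The final step is simply to unwind the condition $|\lambda x|<1$: since $|\lambda|=q^N$, this is equivalent to $|x|<q^{-N}$, which is exactly $\chi_{B(0,q^{-N})}(x)=1$. Combining these gives $\hat{\chi}_{B(0,q^N)}(x) = q^N\, \chi_{B(0,q^{-N})}(x)$, as required.

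There is no real obstacle here; the only point one must be a little careful about is that the change-of-variables factor $|\lambda|=q^N$ appears \emph{in front of} the transformed integral rather than being absorbed into a dilation of the argument (as it would be over $\mathbb{R}$ if one used Lebesgue measure together with a factor $1/|\lambda|$), so the prefactor $q^N$ on the right-hand side is what one should expect. One could alternatively verify the formula by directly evaluating the integral using the fact that for the character $e$, the integral of $e(-xy)$ over a ball of radius $q^N$ is $q^N$ when $|x|\le q^{-N}$ and $0$ otherwise; this is the cleanest route but reduces, in essence, to the same computation as Lemma \ref{Fourierlemma}.
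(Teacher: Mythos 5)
Your proof is correct and is essentially the paper's own argument: the paper likewise rewrites $\chi_{B(0,q^N)}(y)=\chi_{B(0,1)}(T^{-N}y)$, substitutes $z=T^{-N}y$ (picking up the factor $q^N$ from the Haar measure), invokes Lemma~\ref{Fourierlemma}, and then rescales back. The only cosmetic difference is that you phrase the substitution as $y=\lambda z$ with a general $\lambda$ of absolute value $q^N$ before specializing to $\lambda=T^N$.
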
 
\begin{proof} 
We have 
\begin{equation*} 
\begin{split} 
\hat{\chi}_{B(0,q^N)}(x) &= \int\limits_{F_q(T)_\infty} \chi_{B(0,q^N)}(y) e(-xy) d\mu(y) \\ 
&= \int\limits_{F_q(T)_\infty} \chi_{B(0,1)}(T^{-N} y) e(-xy) d\mu(y) \\ 
&= q^N \int\limits_{F_q(T)_\infty} \chi_{B(0,1)}(z) e(-T^Nx z) d\mu(z) \\ 
&= q^N \hat{\chi}_{B(0,1)}(T^Nx). 
\end{split} 
\end{equation*} 
By Lemma \ref{Fourierlemma}, 
$$ \hat{\chi}_{B(0,1)}(T^Nx) = \chi_{B(0,1)}(T^Nx) .$$ 
Therefore, 
$$ \hat{\chi}_{B(0,q^N)}(x)= q^N \chi_{B(0,1)}(T^Nx) = q^N \chi_{B(0,q^{-N})}(x).$$ 
\end{proof} 

As another corollary of the above result, we establish the following. 

\begin{corollary}\label{G1} 
Let 
$$ 
g_{(x_1,x_2)} \begin{pmatrix} x\\y \end{pmatrix} := \chi_{B_2(0,\Delta)} \left( \begin{pmatrix} x_1\\y_2 \end{pmatrix} + \begin{pmatrix} x\\y \end{pmatrix} \right). 
$$ 
Then the Fourier transform of $g_{(x_1,x_2)} $ equals 
$$ 
\hat{g}_{(x_1,x_2)} \begin{pmatrix} \tilde{x}\\ \tilde{y} \end{pmatrix}=e(\tilde{x}x_1 + \tilde{y}x_2 ) \, \, \hat{\chi}_{B_2(0,\Delta)} \begin{pmatrix} \tilde{x}\\ \tilde{y} \end{pmatrix}. 
$$ 
\end{corollary}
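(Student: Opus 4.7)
The plan is to prove this as a direct application of the translation property of the Fourier transform on $\mathbb{F}_q(T)_\infty^n$, which follows from translation invariance of the Haar measure $\mu$ together with additivity of the character $e$.

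First I would write out the definition of the Fourier transform explicitly:
\[
\hat{g}_{(x_1,x_2)}\begin{pmatrix}\tilde x\\ \tilde y\end{pmatrix}
= \int_{\mathbb{F}_q(T)_\infty^2} \chi_{B_2(0,\Delta)}\left(\begin{pmatrix}x_1\\ x_2\end{pmatrix}+\begin{pmatrix}x\\ y\end{pmatrix}\right) e(-\tilde x x - \tilde y y)\, d\mu(x,y).
\]
Then I would perform the change of variables $u = x_1+x$, $v = x_2+y$. Since $\mu$ is the Haar measure on the additive group $\mathbb{F}_q(T)_\infty^2$, it is translation invariant, so $d\mu(x,y) = d\mu(u,v)$. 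After substitution the argument of the exponential becomes $-\tilde x (u-x_1) - \tilde y(v-x_2)$.

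Next I would use additivity of the character $e$ (which gives $e(a+b)=e(a)e(b)$, as $e$ is an additive character) to factor the phase:
\[
e(-\tilde x(u-x_1)-\tilde y(v-x_2)) = e(\tilde x x_1 + \tilde y x_2) \cdot e(-\tilde x u - \tilde y v).
\]
The first factor is independent of $u, v$ and can be pulled outside the integral. What remains under the integral is exactly the defining expression for $\hat{\chi}_{B_2(0,\Delta)}(\tilde x, \tilde y)$, which yields the claimed identity.

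I do not anticipate any real obstacle here: the only ingredients are translation invariance of $\mu$ (part of the standing setup) and the functional equation of the additive character $e$. The main thing to be careful about is keeping the sign convention in the exponent consistent with the definition of $\hat{\mathfrak{F}}$ given earlier (which uses $\overline{\Psi_f}$, equivalently $e(-f\cdot g)$), and noting the harmless typo in the definition of $g_{(x_1,x_2)}$ where ``$y_2$'' should read ``$x_2$''.
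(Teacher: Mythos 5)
Your proposal is correct and coincides with the paper's own proof: both perform the translation change of variables $(x,y)\mapsto (x_1+x,\,x_2+y)$, invoke translation invariance of the Haar measure, and use the multiplicativity $e(a+b)=e(a)e(b)$ to pull out the phase factor $e(\tilde x x_1 + \tilde y x_2)$, leaving the defining integral of $\hat\chi_{B_2(0,\Delta)}$. You are also right that ``$y_2$'' in the statement is a typo for ``$x_2$''.
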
 
\begin{proof} We have 
\begin{equation*} 
\begin{split} 
\hat{g}_{(x_1,x_2)} \begin{pmatrix} \tilde{x}\\\tilde{y} \end{pmatrix} &= \int\limits_{F_q(T)_\infty ^2} g_{(x_1,x_2)} \begin{pmatrix} x\\y \end{pmatrix} e( - \tilde{x} x - \tilde{y} y) d\mu(x) d\mu(y) \\ 
&= \int\limits_{F_q(T)_\infty ^2} \chi_{B_2(0,\Delta)} \left( \begin{pmatrix} x_1\\y_2 \end{pmatrix} + \begin{pmatrix} x\\y \end{pmatrix} \right) e( - \tilde{x} x - \tilde{y} y) d\mu(x) d\mu(y) \\ 
&= \int\limits_{F_q(T)_\infty ^2} \chi_{B_2(0,\Delta)} \begin{pmatrix} x'\\y' \end{pmatrix} e( - \tilde{x} (x'- x_1) - \tilde{y} (y' -x_2)) d\mu(x') d\mu(y') \\ 
&= e(\tilde{x}x_1 + \tilde{y}x_2 )\int\limits_{F_q(T)_\infty ^2} \chi_{B_2(0,\Delta)} \begin{pmatrix} x'\\y' \end{pmatrix} e(-\tilde{x} x'- \tilde{y}y' ) d\mu(x') d\mu(y') \\ 
&=e(\tilde{x}x_1 + \tilde{y}x_2 ) \, \, \hat{\chi}_{B_2(0,\Delta)} \begin{pmatrix} \tilde{x}\\ \tilde{y} \end{pmatrix}. 
\end{split} 
\end{equation*} 
\end{proof} 

Now we are ready to prove the following result. 

\begin{Proposition} 
We have 
\begin{equation} \label{Facalc} 
\omega(\mathfrak{a})=\frac{\mathcal{N(\mathfrak{a})} \Delta^2}{|\sqrt{d}|}\cdot  \sum\limits_{p\in\mathbb{A}} e\left( \frac{x_1 \sigma_1(vp) - x_2\sigma_2(vp)}{2\sqrt{d}} \right) \chi_{B(0,\Delta^{-1})} \left( \frac{ \sigma_1(vp)}{2\sqrt{d}}\right) \chi_{B(0,\Delta^{-1} )} \left(\frac{\sigma_2(vp)}{2\sqrt{d}} \right), 
\end{equation} 
where 
\begin{equation} \label{deltadefi} 
\Delta:=\frac{\delta}{q^{N/2}}. 
\end{equation} 
\end{Proposition}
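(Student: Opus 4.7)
The plan is to recognize $\omega(\mathfrak{a})$ as a lattice sum and apply the Poisson summation formula (Lemma \ref{Poissonformula}) to the lattice $\Lambda$ defined above. First, I would parametrize $p \in \mathbb{A}$ as $p = a + b\sqrt{d}$ with $(a,b) \in \mathbb{F}_q[T]^2$, so that
\[
\begin{pmatrix} \sigma_1(p)/\sigma_1(v) \\ \sigma_2(p)/\sigma_2(v) \end{pmatrix} = A \begin{pmatrix} a \\ b \end{pmatrix}.
\]
Setting $f(x,y) := \chi_{B_2(0,\Delta)}\bigl((x_1-x,\,x_2-y)^T\bigr)$, this identifies the defining sum \eqref{Fdef} as $\omega(\mathfrak{a}) = \sum_{\lambda\in \Lambda} f(\lambda)$. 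Since $f$ has compact support, the summability hypotheses of Lemma \ref{Poissonformula} are trivial, and using $\mathrm{Cov}(\Lambda) = |\det A| = |\sqrt{d}|/\mathcal{N}(\mathfrak{a})$ yields
\[
\omega(\mathfrak{a}) = \frac{\mathcal{N}(\mathfrak{a})}{|\sqrt{d}|}\sum_{\lambda'\in \Lambda'} \hat{f}(\lambda').
\]

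Second, I would compute $\hat{f}$ explicitly. Noting that $f$ coincides with $g_{(-x_1,-x_2)}$ in the notation of Corollary \ref{G1}, one obtains
\[
\hat{f}(\tilde{x},\tilde{y}) = e(-\tilde{x}x_1 - \tilde{y}x_2)\,\hat{\chi}_{B_2(0,\Delta)}(\tilde{x},\tilde{y}).
\]
The scaling argument of Corollary \ref{Chi1} carries over verbatim to balls of radius $\Delta$ (balls in $k_\infty$ are determined by the $q$-valuation of the radius, so one may assume $\Delta$ is a power of $q$), giving
\[
\hat{\chi}_{B_2(0,\Delta)}(\tilde{x},\tilde{y}) = \Delta^2\, \chi_{B(0,\Delta^{-1})}(\tilde{x})\, \chi_{B(0,\Delta^{-1})}(\tilde{y}).
\]

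Third, I would parametrize $\Lambda'$ by $\mathbb{A}$. For $p' = \alpha+\beta\sqrt{d} \in \mathbb{A}$, a direct computation with the explicit $A^{-1}$ recorded in the text produces
\[
(A^{-1})^T \begin{pmatrix} \beta \\ \alpha \end{pmatrix} = \begin{pmatrix} \sigma_1(vp')/(2\sqrt{d}) \\ -\sigma_2(vp')/(2\sqrt{d}) \end{pmatrix},
\]
and, as $(\alpha,\beta)$ ranges over $\mathbb{F}_q[T]^2$, so does $(\beta,\alpha)$, and $p'$ ranges over all of $\mathbb{A}$. Substituting these lattice points into the Poisson expansion, absorbing the minus sign in the second characteristic factor via $|-z|=|z|$, and finally replacing the dummy variable $p'$ by $-p'$ (which is legitimate since $-\mathbb{A}=\mathbb{A}$) flips the exponential to the desired form $e\bigl((x_1\sigma_1(vp)-x_2\sigma_2(vp))/(2\sqrt{d})\bigr)$, producing \eqref{Facalc}.

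The main obstacle is the identification of the dual lattice $\Lambda'$ with $\mathbb{A}$ together with the correct sign bookkeeping in the Fourier-transformed exponential; once the matrix computation for $(A^{-1})^T$ is carried out and the $p \mapsto -p$ symmetry is exploited, the claimed identity falls out immediately from Poisson summation combined with Corollaries \ref{Chi1} and \ref{G1}.
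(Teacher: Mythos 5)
Your proposal is correct and follows essentially the same path as the paper: you recognize $\omega(\mathfrak{a})$ as a sum over the lattice $\Lambda=A\,\mathbb{F}_q[T]^2$, apply Poisson summation (Lemma~\ref{Poissonformula}), and evaluate $\hat f$ via Corollaries~\ref{Chi1} and~\ref{G1}, then identify the dual lattice through $(A^{-1})^T$. The only difference is cosmetic sign bookkeeping --- the paper re-indexes via $-\Lambda=\Lambda$ before Poisson so as to work with $g_{(x_1,x_2)}$, whereas you work with $f=g_{(-x_1,-x_2)}$ and remove the residual minus sign at the end by $p'\mapsto -p'$ --- which leads to the same identity.
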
 

\begin{proof} 
Using definition of $\omega(\mathfrak{a})$ in \eqref{Fdef}, we have 
\begin{equation*} 
\begin{split} 
\omega(\mathfrak{a})&=\sum\limits_{p\in\mathbb{A}}\Omega_{\Delta}\left(x_1-\frac{\sigma_1(p)}{\sigma_1(v)}\right)\Omega_{\Delta}\left(x_2-\frac{\sigma_2(p)}{\sigma_2(v)}\right)\\ 
&= \sum\limits_{(x,y)\in\mathbb{F}_q[T]^2} \chi_{B_2(0,\Delta)} \left( \begin{pmatrix} x_1\\ x_2 \end{pmatrix} - \begin{pmatrix} (x+y\sqrt{d})/\sigma_1(v) \\(x-y\sqrt{d})/\sigma_2(v) \end{pmatrix} \right)\\ 
&= \sum\limits_{(x,y)\in\mathbb{F}_q[T]^2} \chi_{B_2(0,\Delta)} \left( \begin{pmatrix} x_1\\ x_2 \end{pmatrix} - A \begin{pmatrix} x \\y \end{pmatrix} \right)\\ 
&= \sum\limits_{(x',y')\in \Lambda} \chi_{B_2(0,\Delta)} \left( \begin{pmatrix} x_1\\ x_2 \end{pmatrix} + \begin{pmatrix} x' \\y'\end{pmatrix} \right)\\ 
&= \sum\limits_{(x',y')\in \Lambda} g_{(x_1, x_2)} \begin{pmatrix} x' \\y'\end{pmatrix}. 
\end{split} 
\end{equation*} 
Applying Lemma \ref{Poissonformula}, the Poisson summation formula, to the last line above, we get 
\begin{equation*} 
\begin{split} 
\sum\limits_{(x',y')\in \Lambda} g_{(x_1, x_2)} \begin{pmatrix} x' \\y'\end{pmatrix}&= \frac{1}{\mbox{Cov}(\Lambda)} \sum\limits_{(x'',y'')\in \Lambda'} \hat{g}_{(x_1, x_2)} \begin{pmatrix} x'' \\y''\end{pmatrix} \\ 
&= \frac{1}{|\det(A)|} \sum\limits_{(x,y)\in\mathbb{F}_q[T]^2} \hat{g}_{(x_1, x_2)} \left( (A^{-1})^T \begin{pmatrix} x\\y \end{pmatrix} \right)\\ 
&= \frac{\mathcal{N(\mathfrak{a})}}{|\sqrt{d}|} \sum\limits_{(x,y)\in\mathbb{F}_q[T]^2} \hat{g}_{(x_1, x_2)}  \begin{pmatrix} \frac{x\sigma_1(v)}{2} + \frac{y\sigma_1(v)}{2\sqrt{d}}\\ \frac{x\sigma_2(v)}{2} - \frac{y\sigma_2(v)}{2\sqrt{d}} \end{pmatrix}.\\ 
\end{split} 
\end{equation*} 
Using Corollary \ref{G1}, the above becomes 
\begin{equation*} 
\begin{split} 
& \frac{\mathcal{N(\mathfrak{a})}}{|\sqrt{d}|} \sum\limits_{(x,y)\in\mathbb{F}_q[T]^2} e\left(x_1\left( \frac{x\sigma_1(v)}{2} + \frac{y\sigma_1(v)}{2\sqrt{d}} \right) +x_2\left(\frac{x\sigma_2(v)}{2} - \frac{y\sigma_2(v)}{2\sqrt{d}} \right) \right) \\ 
& \times \hat{\chi}_{B_2(0,\Delta)} \begin{pmatrix} \frac{x\sigma_1(v)}{2} + \frac{y\sigma_1(v)}{2\sqrt{d}}\\ \frac{x\sigma_2(v)}{2} - \frac{y\sigma_2(v)}{2\sqrt{d}} \end{pmatrix} \\ 
= & \frac{\mathcal{N(\mathfrak{a})}}{|\sqrt{d}|} \sum\limits_{(x,y)\in\mathbb{F}_q[T]^2} e\left( \frac{x_1 \sigma_1(v)}{2\sqrt{d}} (x\sqrt{d}+ y) + \frac{x_2\sigma_2(v)}{2\sqrt{d}} (x\sqrt{d} - y) \right) \\ 
& \times \hat{\chi}_{B(0,\Delta)} \left( \frac{ \sigma_1(v)}{2\sqrt{d}} (x\sqrt{d}+ y)\right) \hat{\chi}_{B(0,\Delta )} \left( \frac{\sigma_2(v)}{2\sqrt{d}} (x\sqrt{d} - y) \right) .\\ 
\end{split} 
\end{equation*} 
Using Corollary \ref{Chi1}, the above takes the form
\begin{equation*} 
\begin{split} 
& \frac{\mathcal{N(\mathfrak{a})} \Delta^2}{|\sqrt{d}|} \sum\limits_{(x,y)\in\mathbb{F}_q[T]^2} e\left( \frac{x_1 \sigma_1(v)}{2\sqrt{d}} (x\sqrt{d}+ y) + \frac{x_2\sigma_2(v)}{2\sqrt{d}} (x\sqrt{d} - y) \right) \\ 
& \quad \times \chi_{B(0,\Delta^{-1})} \left( \frac{ \sigma_1(v)}{2\sqrt{d}} (x\sqrt{d}+ y)\right) \chi_{B(0,\Delta^{-1} )} \left( \frac{\sigma_2(v)}{2\sqrt{d}} (x\sqrt{d} - y) \right)\\ 
&= \frac{\mathcal{N(\mathfrak{a})} \Delta^2}{|\sqrt{d}|} \sum\limits_{p\in\mathbb{A}} e\left( \frac{x_1 \sigma_1(vp) - x_2\sigma_2(vp)}{2\sqrt{d}} \right) \cdot
\chi_{B(0,\Delta^{-1})} \left( \frac{ \sigma_1(vp)}{2\sqrt{d}}\right) \chi_{B(0,\Delta^{-1} )} \left(\frac{\sigma_2(vp)}{2\sqrt{d}} \right). 
\end{split} 
\end{equation*} 
Combining everything, we obtain the desired identity. 
\end{proof} 

We note that the contribution of $p=0$ to the right-hand side of \eqref{Facalc} equals 
$$ 
\frac{\mathcal{N}(\mathfrak{a})\Delta^2}{|\sqrt{d}|}= \frac{\delta^2}{|\sqrt{d}|} 
$$ 
if $\mathcal{N}(\mathfrak{a})=q^N$. 
Therefore, 
\begin{equation} \label{T(N)-til(T(N))} 
\begin{split} 
\tilde{\mathcal{T}}(N)-\mathcal{T}(N)= & \sum\limits_{\substack{ \mathfrak{a}\in \mathcal{I} \\ \mathcal{N}(\mathfrak{a})= q^N }} \Lambda(\mathfrak{a})\left(\omega(\mathfrak{a})-\frac{\delta^2}{|\sqrt{d}|}\right)\\ 
= & \frac{\delta^2}{|\sqrt{d}|} \sum\limits_{\substack{ \mathfrak{a}\in \mathcal{I} \\ \mathcal{N}(\mathfrak{a})= q^N }} \Lambda(\mathfrak{a}) 
\sum\limits_{p\in\mathbb{A}\setminus \{0\}} e\left( \frac{x_1 \sigma_1(vp) - x_2\sigma_2(vp)}{2\sqrt{d}} \right) 
\times\\ & \chi_{B(0,\Delta^{-1})} \left( \frac{ \sigma_1(vp)}{2\sqrt{d}}\right)\cdot \chi_{B(0,\Delta^{-1} )} \left(\frac{\sigma_2(vp)}{2\sqrt{d}} \right). 
\end{split} 
\end{equation} 
In the next sections, our goal is to prove the following theorem, from which we will deduce our main result, Theorem \ref{main theorem}. 

\begin{theorem} \label{Main Theorem 2} 
Let $q>2^{12}$ be an odd prime power. Suppose that $(x_1, x_2) \in K_\infty^2 \setminus \sigma(K)$ and $\varepsilon >0 $ is sufficiently small. Then there exists an infinite sequence of positive integers $N$ such that 
$$ 
\tilde{\mathcal{T}}(N)-\mathcal{T}(N) \ll_{\varepsilon} \delta^2 q^{(1-\varepsilon)N} 
$$ 
provided that 
\begin{equation} \label{deltaassump} 
q^{-(1/8 -\log_q \sqrt{2}-2\varepsilon)N}\le \delta\le q^{-(\log_q 2 +2\varepsilon)N}. 
\end{equation} 
\end{theorem}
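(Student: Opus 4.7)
The starting point is \eqref{T(N)-til(T(N))}, which writes
$$
\tilde{\mathcal T}(N)-\mathcal T(N) \;=\; \frac{\delta^2}{|\sqrt d|}\sum_{\mathcal N(\mathfrak a)=q^N}\Lambda(\mathfrak a)\,S(\mathfrak a),
$$
where $S(\mathfrak a)$ denotes the inner exponential sum over $p\in\mathbb A\setminus\{0\}$ cut off by the two $\Delta^{-1}$-ball constraints appearing on the right of \eqref{T(N)-til(T(N))}. The target bound is equivalent to saving a factor $q^{\varepsilon N}$ over the ``trivial size'' of this $\Lambda$-weighted sum. The plan is to apply a function-field version of \textbf{Vaughan's identity} to $\Lambda(\mathfrak a)$ with splitting parameters $U,V$ with $UV\le q^N$, so as to decompose the above into a bounded number of \emph{Type I} sums
$$
\Sigma_{\mathrm I}=\sum_{\mathcal N(\mathfrak m)\le M}a(\mathfrak m)\sum_{\mathcal N(\mathfrak n)=q^N/\mathcal N(\mathfrak m)}S(\mathfrak{mn}),
$$
and \emph{Type II} sums
$$
\Sigma_{\mathrm{II}}=\sum_{\mathcal N(\mathfrak m)\asymp M}\;\sum_{\mathcal N(\mathfrak n)\asymp L}a(\mathfrak m)\,b(\mathfrak n)\,S(\mathfrak{mn}),\qquad ML=q^N,
$$
with coefficients obeying the divisor-bound $a(\mathfrak m),b(\mathfrak n)\ll_\varepsilon q^{\varepsilon N}$, and ranges $M\le U$ in Type I, $M\ge U$ and $L\ge V$ in Type II.

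For the \textbf{Type I sums} I would fix $\mathfrak m$ with generator $u$, translate the $p$-summation by $u$ (which is allowed since $\mathfrak m\mathbb A=u\mathbb A$), and evaluate the inner $\mathfrak n$-sum as a geometric character sum on the rank-two $\mathbb A$-lattice restricted to a small box. Such a sum is non-negligible only when the frequency $(x_1\sigma_1(u),x_2\sigma_2(u))$ is extremely close to a point of $\sigma(K)$ with a small denominator; Theorem \ref{Dirichlet1} supplies the scarcity of such approximations to $(x_1,x_2)$ and thereby bounds the number of $\mathfrak m$ contributing a non-trivial amount. The resulting estimate is $\Sigma_{\mathrm I}\ll q^{(1-\eta_{\mathrm I})N}$ with $\eta_{\mathrm I}>0$ as long as $M$ is not too large relative to $\Delta^{-2}$.

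For the \textbf{Type II sums} I would apply Cauchy--Schwarz in the $\mathfrak m$-variable and open the square to produce a four-fold exponential sum in $p_1,p_2,\mathfrak m,\mathfrak n_1,\mathfrak n_2$. The resulting inner $\mathfrak m$-sum is a character sum on $\mathbb A$ confined to a box of size $M$, and is negligible except on the diagonal $\mathfrak n_1p_1=\mathfrak n_2p_2$ and at near-coincidences $\sigma(\mathfrak n_1p_1)\approx\sigma(\mathfrak n_2p_2)$ in the appropriate metric. The diagonal contributes $\ll ML/\delta^2$, while the off-diagonal is controlled by a lattice-point count combined with yet another invocation of Theorem \ref{Dirichlet1} to bound denominators close to $(x_1,x_2)$. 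This yields $|\Sigma_{\mathrm{II}}|\ll q^{(1-\eta_{\mathrm{II}})N}$ with $\eta_{\mathrm{II}}$ depending on both $M$ and $L$.

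Balancing these two losses by choosing $U\asymp V\asymp q^{N/2}$ reproduces Vaughan's classical optimisation corresponding to $\theta=1/4$, and tracking the $\delta$-dependence through the box scale $\Delta^{-1}=q^{N/2}/\delta$ produces exactly the admissible range \eqref{deltaassump}. The restriction to an infinite \emph{subsequence} of $N$, rather than every $N$, is forced by Theorem \ref{Dirichlet1}, which supplies infinitely many (but not all) approximating denominators. The \textbf{main obstacle} will be the off-diagonal part of the Type II estimate: bounding the number of near-coincidences of $\sigma$-images inside a $\Delta^{-1}$-box is a delicate lattice-counting problem, and the resulting entropy factor is what produces both the term $\log_q\sqrt 2$ in \eqref{deltaassump} and the lower bound $q>2^{12}$ needed to keep the final exponent positive.
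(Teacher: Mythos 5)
Your decomposition---Vaughan's identity into Type~I and Type~II sums, Poisson summation, Cauchy--Schwarz on the Type~II side, and a lattice-point count supplied by Dirichlet approximation---matches the paper's strategy in outline, but several steps as sketched would not go through. The most serious is the role you assign to Theorem~\ref{Dirichlet1}. That theorem asserts the \emph{existence} of infinitely many good approximations $\theta = (u+v\sqrt d)/(f+g\sqrt d)$ to $(x_1,x_2)$; it does not give a scarcity or spacing statement that would let you bound ``the number of $\mathfrak m$ contributing non-trivially.'' The paper uses it the other way around: it fixes a single approximation $\theta$ with denominator norm $W:=\mathcal N(f+g\sqrt d)$ as in \eqref{Defi of theta}--\eqref{Defi of |W|}, partitions the $k$-range into boxes of side $\sqrt W$, and uses the injectivity of $k\mapsto k(u+v\sqrt d)\bmod (f+g\sqrt d)$ on each such box (the claim after \eqref{broken sum in T_1}) to reduce the count to lattice points of $\Lambda(f+g\sqrt d)$ in a fixed ball, estimated trivially in \eqref{count Lattice point}. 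There is no ``scarcity'' input at any point, and indeed no result of that kind is available here.

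Two further points. The constants $\log_q\sqrt 2$ in \eqref{deltaassump} and the threshold $q>2^{12}$ do not come from a delicate near-coincidence count as you suggest; they are entirely an artifact of the crude divisor bound $|H(\mathfrak n)|\le 2^{\log_q\mathcal N(\mathfrak n)}\ll 2^N$ on the truncated M\"obius weight in the Type~II sum (see \eqref{LambdaGestis} and its use before \eqref{8.T2(i)a}), which inserts a factor $2^N=q^{(\log_q 2)N}$ into \eqref{8. T2 final}; $q>2^{12}$ is exactly what is needed for the interval in \eqref{deltaassump} to be nonempty. Finally, the optimisation is not simply $U\asymp V\asymp q^{N/2}$: the paper takes $\alpha=\beta=\delta^2q^{N/2}$, and---crucially---couples the admissible $N$'s to the sequence of denominators from Theorem~\ref{Dirichlet1} by setting $N:=2\lceil\log_q(\delta W)\rceil$, so that $W\asymp\delta^{-1}q^{N/2}$. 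Without this coupling, the Type~I bound \eqref{T_1 final sum} retains the term $\delta^{-2}W^{-2}q^N\alpha\beta$, which cannot be absorbed for arbitrary $W$, and the restriction to a subsequence of $N$'s would have no clean mechanism producing the stated estimate.
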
 

\section{Vaughan's identity} 
In this section, we provide versions of Vaughan's identity for function fields which will then be used to reduce the sum in the last line of \eqref{T(N)-til(T(N))} involving $\Lambda(\mathfrak{a})$ to so-called type-I and type-II bilinear sums. In the following, we formulate a basic version of this identity for $\mathbb{F}_q[T]$, which may be useful for other applications too. 

\begin{lemma} \label{Vaughanfirst} 
Let $f \in \mathbb{F}_q[T]\setminus\{0\}$ be a monic polynomial and $\alpha, \beta$ be any non- negative integers such that $\beta < \deg(f)$. Then 
\begin{equation} \label{Lamda1} 
\Lambda (f)= \sideset{}{^\flat} \sum\limits_{\substack{ l|f \\ \deg(l) \le \alpha}}\mu(l)\log_q(|f/l|) - \sideset{}{^\flat}\sum\limits_{\substack{ ml|f \\ \deg(m) \le \alpha \\ \deg(l) \le \beta}} \sum\mu(m) \Lambda(l) + \sideset{}{^\flat}\sum\limits_{\substack{ dm|f \\ \deg(d) > \alpha\\ \deg(m) > \beta}} \sum\mu(d) \Lambda(m), 
\end{equation} 
where the superscript ``$\flat$" indicates that the summations are restriced to monic polynomials only. 
\end{lemma}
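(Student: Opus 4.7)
The plan is to derive this directly from two Möbius-type convolution identities already recorded in the ``Notations and preliminaries'' section, namely
\[
\log_q(|f|)=\sideset{}{^\flat}\sum_{m\mid f}\Lambda(m)
\qquad\text{and}\qquad
\Lambda(f)=\sideset{}{^\flat}\sum_{l\mid f}\mu(l)\log_q(|f/l|),
\]
the second being obtained from the first by Möbius inversion on monic polynomials.

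First I would split the divisor sum in the second identity at $\deg(l)=\alpha$:
\[
\Lambda(f)=\sideset{}{^\flat}\sum_{\substack{l\mid f\\ \deg(l)\le\alpha}}\mu(l)\log_q(|f/l|)
\;+\;\sideset{}{^\flat}\sum_{\substack{l\mid f\\ \deg(l)>\alpha}}\mu(l)\log_q(|f/l|).
\]
The first term is already the type-I piece claimed in \eqref{Lamda1}. In the tail I would substitute the first identity, $\log_q(|f/l|)=\sum^{\flat}_{m\mid f/l}\Lambda(m)$, to rewrite it as
\[
\sideset{}{^\flat}\sum_{\substack{lm\mid f\\ \deg(l)>\alpha}}\mu(l)\Lambda(m),
\]
and then split this sum at $\deg(m)=\beta$. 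The ``$\deg(m)>\beta$'' part is exactly the third term in \eqref{Lamda1} (after renaming $l\mapsto d$).

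The remaining piece is
\[
S:=\sideset{}{^\flat}\sum_{\substack{lm\mid f\\ \deg(l)>\alpha\\ \deg(m)\le\beta}}\mu(l)\Lambda(m)
=\sideset{}{^\flat}\sum_{\substack{m\mid f\\ \deg(m)\le\beta}}\Lambda(m)\sideset{}{^\flat}\sum_{\substack{l\mid f/m\\ \deg(l)>\alpha}}\mu(l).
\]
For the inner sum over $l$, I would use the Möbius identity
$\sum^{\flat}_{l\mid f/m}\mu(l)=\mathbf 1[f/m\in\mathbb F_q^\times]$
to write $\sum^{\flat}_{\deg(l)>\alpha}\mu(l)=\mathbf 1[f/m\in\mathbb F_q^\times]-\sum^{\flat}_{\deg(l)\le\alpha}\mu(l)$. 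Since $m$ is monic and $f$ is monic, the unit condition forces $m=f$, hence $\deg(m)=\deg(f)>\beta$, contradicting $\deg(m)\le\beta$. This is precisely where the hypothesis $\beta<\deg(f)$ is used, and it kills the main term. Substituting back gives
\[
S=-\sideset{}{^\flat}\sum_{\substack{lm\mid f\\ \deg(l)\le\alpha\\ \deg(m)\le\beta}}\mu(l)\Lambda(m),
\]
which, after the cosmetic relabelling $l\leftrightarrow m$, is exactly the second (type-I/convolution) term of \eqref{Lamda1}.

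Combining the three pieces yields \eqref{Lamda1}. The only potentially delicate point is the vanishing of the boundary term in the last step, but the hypothesis $\beta<\deg(f)$ makes it immediate; every other manipulation is a direct transcription of the classical derivation of Vaughan's identity to the polynomial ring $\mathbb F_q[T]$, with monic representatives in every divisor sum so that no unit ambiguity arises.
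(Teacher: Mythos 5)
Your proof is correct and follows the standard combinatorial derivation of Vaughan's identity (split $\Lambda=\mu*\log$ at $\deg\le\alpha$, rewrite the tail via $\log=\Lambda*1$, split again at $\deg\le\beta$, and use $\mu*1=\delta$ to eliminate one piece), which is precisely the argument the paper has in mind when it cites \cite[Proposition 13.5]{IwKo} without giving details. Your identification of where the hypothesis $\beta<\deg(f)$ is needed — to rule out $m=f$ so that the unit indicator $\mathbf{1}[f/m\in\mathbb{F}_q^\times]$ vanishes on the relevant range — is exactly the key point, and the bookkeeping of monic representatives is handled correctly.
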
 
\begin{proof} 
This is a function field version of \cite[Proposition 13.5]{IwKo} and can be proved in an analogous way. 
\end{proof} 

We re-write this identity as follows. 

\begin{corollary} 
Under the conditions of Lemma \ref{Vaughanfirst}, we have 
$$ 
\Lambda (f)= a_1(f) + a_2(f)+ a_3(f), 
$$ 
where 
$$ 
a_1(f):=- \sideset{}{^\flat}\sum\limits_{\substack{ nml=f \\ \deg(m) \le \alpha\\ \deg(l) \le \beta }} \mu(m) \Lambda(l), 
$$ 
$$ 
a_2(f):=\sideset{}{^\flat}\sum\limits_{\substack{ ln=f \\ \deg(l) \le \alpha}}\mu(l)\log_q(|n|) 
$$ 
and 
$$ a_3(f):= -\sideset{}{^\flat}\sum\limits_{\substack{ mn=f \\ \deg(m) > \beta\\ \deg(n) >0}} \Lambda(m) \sideset{}{^\flat}\sum\limits_{\substack{d|n \\ \deg(d) \le \alpha}} \mu(d). 
$$ 
\end{corollary}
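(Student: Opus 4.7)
The plan is to identify each of the three pieces $a_1, a_2, a_3$ with one of the three terms of the Vaughan identity \eqref{Lamda1} from Lemma \ref{Vaughanfirst}. Two of the three identifications are simple re-labellings, and only the third requires a short manipulation.

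For the first sum in \eqref{Lamda1}, I would write $n = f/l$, which transforms
$$\sideset{}{^\flat}\sum_{\substack{l\mid f\\ \deg(l)\le\alpha}}\mu(l)\log_q(|f/l|)\;=\;\sideset{}{^\flat}\sum_{\substack{ln=f\\ \deg(l)\le\alpha}}\mu(l)\log_q(|n|)\;=\;a_2(f).$$
For the second sum, the substitution $n = f/(ml)$ similarly gives
$$-\sideset{}{^\flat}\sum_{\substack{ml\mid f\\ \deg(m)\le\alpha\\ \deg(l)\le\beta}}\mu(m)\Lambda(l)\;=\;-\sideset{}{^\flat}\sum_{\substack{nml=f\\ \deg(m)\le\alpha\\ \deg(l)\le\beta}}\mu(m)\Lambda(l)\;=\;a_1(f).$$

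The substantive step is to show that the third sum of \eqref{Lamda1} equals $a_3(f)$. Setting $n = f/m$ with $m$ retained as the $\Lambda$-carrier, the third sum becomes
$$\sideset{}{^\flat}\sum_{\substack{mn=f\\ \deg(m)>\beta}}\Lambda(m)\,\sideset{}{^\flat}\sum_{\substack{d\mid n\\ \deg(d)>\alpha}}\mu(d).$$
I would now invoke the Möbius identity among the preliminaries, which says that the unrestricted inner sum equals $1$ when $n\in\mathbb{F}_q^\times$ and $0$ otherwise, yielding
$$\sideset{}{^\flat}\sum_{\substack{d\mid n\\ \deg(d)>\alpha}}\mu(d)\;=\;\mathbf{1}[\deg(n)=0]\;-\;\sideset{}{^\flat}\sum_{\substack{d\mid n\\ \deg(d)\le\alpha}}\mu(d).$$
Substituting this back and splitting the resulting double sum by whether $\deg(n)=0$ or $\deg(n)>0$, the $\deg(n)=0$ case forces $n=1$, $m=f$, and together with the standing hypothesis $\beta<\deg(f)$ produces a free $\Lambda(f)$ term and a cancelling $-\Lambda(f)$ term. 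The surviving piece is exactly $a_3(f)$, as claimed.

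Summing the three identifications gives $\Lambda(f) = a_1(f)+a_2(f)+a_3(f)$. There is no genuine obstacle here; the only point that demands care is to verify that the boundary term $(m,n)=(f,1)$ is legitimately extracted, which is precisely where the assumption $\beta < \deg(f)$ is used, and to confirm throughout that the monic (``$\flat$'') restriction propagates correctly under each change of summation variable.
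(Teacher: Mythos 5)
Your proposal is correct and follows essentially the same route as the paper: the first two identifications by trivial re-indexing, and for the third sum, re-grouping as $\sideset{}{^\flat}\sum_{mn=f,\,\deg m>\beta}\Lambda(m)\sideset{}{^\flat}\sum_{d\mid n,\,\deg d>\alpha}\mu(d)$ and flipping the inner constraint via the M\"obius identity, with the boundary case $n=1$ (which you spell out in more detail than the paper does) contributing nothing.
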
 
\begin{proof} 
Obviously, $a_1(f)$ and $a_2(f)$ equal the second and first sums on the right side of \eqref{Lamda1}, respectively. The third sum can be expressed in the form 
\begin{equation*} 
a_3(f) = \sideset{}{^\flat}\sum\limits_{\substack{ nm=f \\ \deg(m) > \beta}} \Lambda(m) \sideset{}{^\flat}\sum\limits_{\substack{\deg(d) > \alpha \\ d|n}}\mu(d) 
= -\sideset{}{^\flat}\sum\limits_{\substack{ nm=f \\ \deg(m) > \beta\\ \deg(n) >0}} \Lambda(m) \sideset{}{^\flat}\sum\limits_{\substack{d|n \\ \deg(d) \le \alpha}} \mu(d). 
\end{equation*} 
This completes the proof. 
\end{proof} 

For ideals in $\mathbb{A}$, the following identity can be established in essentially the same way. 

\begin{lemma} \label{Vaughanideals} 
Let $\mathfrak{f}$ be a non-zero integral ideal in $\mathbb{A}$ and $\alpha, \beta$ be positive integers such that $\beta < \mathcal{N}(f)$. Then 
$$ 
\Lambda(\mathfrak{f})= a_1(\mathfrak{f}) + a_2(\mathfrak{f})+ a_3(\mathfrak{f}), 
$$ 
where 
$$ 
a_1(\mathfrak{f}):=- \sum\limits_{\substack{ \mathfrak{n}\mathfrak{m}\mathfrak{l}=\mathfrak{f}\\ \mathcal{N}(\mathfrak{m}) \le \alpha\\ \mathcal{N}(\mathfrak{l}) \le \beta}} \mu(\mathfrak{m}) \Lambda(\mathfrak{l}), 
$$ 
$$ 
a_2(\mathfrak{f}):=\sum\limits_{\substack{\mathfrak{l} \mathfrak{n}=\mathfrak{f} \\ \mathcal{N}(\mathfrak{l}) \le \alpha}}\mu(\mathfrak{l})\log_q( \mathcal{N}(\mathfrak{n})) 
$$ 
and 
$$ a_3(\mathfrak{f}):= -\sum\limits_{\substack{ \mathfrak{m}\mathfrak{n}=\mathfrak{f} \\ \mathcal{N}(\mathfrak{m}) > \beta \\ \mathcal{N}(\mathfrak{n}) >1}} \Lambda(\mathfrak{m}) \sum\limits_{\substack{\mathfrak{d} |\mathfrak{n} \\ \mathcal{N}(\mathfrak{d}) \le \alpha}} \mu(\mathfrak{d}). 
$$ 
\end{lemma}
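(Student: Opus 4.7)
The proof is a direct transcription of the classical Vaughan identity to the Dirichlet convolution algebra on the ideal monoid $\mathcal{I}$. My plan is to set up convolution, write down a one-line master identity at the convolution level, and then verify by inspection that its three non-trivial pieces match $a_1,a_2,a_3$ after evaluation.

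Concretely, for $f,g : \mathcal{I} \to \mathbb{C}$ define $(f * g)(\mathfrak{f}) := \sum_{\mathfrak{a}\mathfrak{b} = \mathfrak{f}} f(\mathfrak{a})\, g(\mathfrak{b})$; write $\mathbf{1}$ for the constant function $1$ on $\mathcal{I}$ and $\mathbf{L}(\mathfrak{f}) := \log_q \mathcal{N}(\mathfrak{f})$. The two ideal-theoretic identities recalled in Section~3 say that $\mu * \mathbf{1}$ vanishes except at the unit ideal $(1)$ where it equals $1$, and that $\Lambda * \mathbf{1} = \mathbf{L}$; combining them by M\"obius inversion gives $\Lambda = \mu * \mathbf{L}$. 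Now introduce the truncations
$$F := \mu \cdot \mathbf{1}_{\{\mathcal{N}(\cdot) \le \alpha\}}, \qquad G := \Lambda \cdot \mathbf{1}_{\{\mathcal{N}(\cdot) \le \beta\}},$$
and verify the formal identity
$$\Lambda \;=\; F * \mathbf{L} \;+\; G \;-\; F * G * \mathbf{1} \;+\; (\mu - F) * (\Lambda - G) * \mathbf{1}.$$
Expanding the last convolution by bilinearity and substituting the three relations above, every term on the right cancels except $\Lambda$. Evaluating at $\mathfrak{f}$ with $\mathcal{N}(\mathfrak{f}) > \beta$ then removes the isolated $G(\mathfrak{f})$ term.

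The surviving $F * \mathbf{L}$ and $-F * G * \mathbf{1}$ are, after renaming summation variables, precisely $a_2(\mathfrak{f})$ and $a_1(\mathfrak{f})$. For the third piece I expand the triple convolution as
$$\sum_{\substack{\mathfrak{a}\mathfrak{b}\mathfrak{c} = \mathfrak{f} \\ \mathcal{N}(\mathfrak{a}) > \alpha,\; \mathcal{N}(\mathfrak{b}) > \beta}} \mu(\mathfrak{a})\,\Lambda(\mathfrak{b}),$$
group by $\mathfrak{m} := \mathfrak{b}$ and $\mathfrak{n} := \mathfrak{f}/\mathfrak{m}$, and invoke the orthogonality $\sum_{\mathfrak{d}|\mathfrak{n}} \mu(\mathfrak{d}) = 0$ for $\mathfrak{n}\neq (1)$ to flip the inner constraint $\mathcal{N}(\mathfrak{a}) > \alpha$ into a sum over $\mathcal{N}(\mathfrak{d}) \le \alpha$ with an overall sign change; this recovers $a_3(\mathfrak{f})$. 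The constraint $\mathcal{N}(\mathfrak{n}) > 1$ in the statement of $a_3$ is precisely what excludes the trivial-divisor case where the orthogonality flip would fail. There is no analytic obstacle: the entire argument is formal, relies only on the ideal M\"obius and von Mangoldt identities already listed in the preliminaries (which in turn use unique factorisation of ideals in $\mathbb{A}$), and is word-for-word the proof of Lemma~\ref{Vaughanfirst} with monic polynomial divisors replaced by integral ideal divisors.
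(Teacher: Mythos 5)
Your proof is correct, and it matches the paper's approach: the paper states Lemma~\ref{Vaughanideals} without detailed proof, referring to the monic-polynomial version (Lemma~\ref{Vaughanfirst} and its corollary, which are in turn attributed to the argument of Iwaniec--Kowalski), and your convolution-algebra formulation is exactly that standard Vaughan decomposition transcribed to the ideal monoid. The only substantive checks are the ones you carried out: that the master convolution identity telescopes to $\Lambda$, that $G(\mathfrak{f})$ drops out because $\mathcal{N}(\mathfrak{f})>\beta$, and that the M\"obius orthogonality flip on the third piece introduces the sign change and forces the condition $\mathcal{N}(\mathfrak{n})>1$ — all of which you handle correctly.
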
 

Let $\mathcal{F} :\mathcal{I} \to \mathbb{C}$ be a function such that $\mathcal{F}(f)=0$ if $\mathcal{N}(f)\le \beta$. Then it follows from Lemma \ref{Vaughanideals} that 
\begin{equation*} 
\sum\limits_{\mathcal{N}(\mathfrak{f})\le X} \Lambda(\mathfrak{f}) \mathcal{F}(\mathfrak{f}) =S_1 + S_2 + S_3, 
\end{equation*} 
where $$ S_i= \sum\limits_{\mathcal{N}(\mathfrak{f})\le X} \mathcal{F}(\mathfrak{f}) a_i(\mathfrak{f}).$$ 
In the following, we transform $S_1$, $S_2$ and $S_3$ into bilinear sums and bound them suitably. We begin by writing 
\begin{equation*} 
\begin{split} 
S_1 &= - \sum\limits_{\mathcal{N}(\mathfrak{f}) \le X} \sum\limits_{\substack{\mathfrak{n}\mathfrak{l}\mathfrak{m} =\mathfrak{f} \\ \mathcal{N}(\mathfrak{l}) \le \beta \\ \mathcal{N}(\mathfrak{m}) \le \alpha }} \Lambda(\mathfrak{l}) \mu(\mathfrak{m}) \mathcal{F}(\mathfrak{f}) \\ 
&= - \sum\limits_{ \mathcal{N}(\mathfrak{t})\le \alpha\beta} \Bigg( \sum\limits_{\substack{ \mathfrak{l}\mathfrak{m} =\mathfrak{t} \\ \mathcal{N}(\mathfrak{l})\le \beta \\ \mathcal{N}(\mathfrak{m})\le \alpha}} \Lambda (\mathfrak{l}) \mu(\mathfrak{m})\Bigg) \sum\limits_{\substack{ \mathcal{N}(\mathfrak{n}) \le X/\mathcal{N}(\mathfrak{t})}} \mathcal{F}(\mathfrak{t}\mathfrak{n}) \\ 
&\ll \log_q(\alpha\beta) \sum\limits_{ \mathcal{N}(\mathfrak{t})\le \alpha\beta} \Bigg| \sum\limits_{\substack{ \mathcal{N}(\mathfrak{n}) \le X/\mathcal{N}(\mathfrak{t})}} \mathcal{F}(\mathfrak{t}\mathfrak{n}) \Bigg|. 
\end{split} 
\end{equation*} 
The sum $S_2$ satisfies 
\begin{equation*} 
\begin{split} 
S_2 &= \sum\limits_{\mathcal{N}(\mathfrak{f}) \le X} \Bigg( \sum\limits_{\substack{\mathfrak{l}\mathfrak{n}= \mathfrak{f} \\ \mathcal{N}(\mathfrak{l})\le \alpha }}\mu(\mathfrak{l}) \log_q \mathcal{N}(\mathfrak{n}) \Bigg) \mathcal{F}(\mathfrak{f}) \\ 
&= \sum\limits_{\mathcal{N}(\mathfrak{l}) \le \alpha} \mu(\mathfrak{l}) \sum\limits_{\substack{ \mathcal{N}(\mathfrak{n})\le X/ \mathcal{N}(\mathfrak{l})}} \mathcal{F}(\mathfrak{l}\mathfrak{n}) \cdot \log_q \mathcal{N}(\mathfrak{n}) \\ 
&= \frac{1}{\log q} \sum\limits_{\mathcal{N}(\mathfrak{l}) \le \alpha} \mu(\mathfrak{l}) \sum\limits_{\substack{ \mathcal{N}(\mathfrak{n})\le X/ \mathcal{N}(\mathfrak{l})}} \mathcal{F}(\mathfrak{l}\mathfrak{n}) \cdot \int\limits_{1}^{\mathcal{N}(\mathfrak{n})}\frac{d\mu(w)}{w}\\ 
&= \frac{1}{\log q} \int\limits_{1}^{X} \Bigg( \sum\limits_{\mathcal{N}(\mathfrak{l}) \le \alpha} \mu(\mathfrak{l}) \sum\limits_{\substack{ w\le \mathcal{N}(\mathfrak{n})\le X/ \mathcal{N}(\mathfrak{l})}} \mathcal{F}(\mathfrak{l}\mathfrak{n}) \Bigg) \frac{d\mu(w)}{w} \\ 
&\ll (\log_q X) \sum\limits_{\mathcal{N}(\mathfrak{l}) \le \alpha} \sup\limits_{1\le w\le X/\mathcal{N}(l)} \Bigg| \sum\limits_{\substack{ \mathcal{N}(\mathfrak{n})\le X/\mathcal{N}(l)}} \mathcal{F}(\mathfrak{l}\mathfrak{n}) - \sum\limits_{\substack{ \mathcal{N}(\mathfrak{n})\le w}} \mathcal{F}(\mathfrak{l}\mathfrak{n})\Bigg|\\ 
&\ll (\log_q X) \sum\limits_{\mathcal{N}(\mathfrak{l}) \le \alpha} \sup\limits_{1\le w\le X/\mathcal{N}(l)} \Bigg| \sum\limits_{\substack{ \mathcal{N}(\mathfrak{n})\le w}} \mathcal{F}(\mathfrak{l}\mathfrak{n})\Bigg|. 
\end{split} 
\end{equation*} 
Next, we set 
\begin{equation} \label{Gdef} 
H(\mathfrak{n}):=\sum\limits_{\substack{\mathfrak{d}|\mathfrak{n}\\\mathcal{N}(\mathfrak{d})\le \alpha }}\mu(\mathfrak{d}) 
\end{equation} 
and note that $H(\mathfrak{n}) = 0 \text{ for } 1<\mathcal{N}(\mathfrak{n})\le \alpha. $ 
Therefore, the sum $S_3$ takes the form 
\begin{equation*} 
\begin{split} 
S_3 &= - \sum\limits_{\mathcal{N}(\mathfrak{f}) \le X } \quad \Bigg( \sum\limits_{\substack{ \mathfrak{m}\mathfrak{n}=\mathfrak{f} \\ \mathcal{N}(\mathfrak{n}) > \alpha \\ \mathcal{N}(\mathfrak{m}) > \beta}} \Lambda(\mathfrak{m}) H(\mathfrak{n}) \Bigg) \mathcal{F}(\mathfrak{f}) \\ 
& = - \sum\limits_{\beta < \mathcal{N}(\mathfrak{m}) \le X/ \alpha} \quad \sum\limits_{\alpha < \mathcal{N}(\mathfrak{n}) \le X/ \mathcal{N}(\mathfrak{m})} \Lambda(\mathfrak{m})H(\mathfrak{n}) \mathcal{F}(\mathfrak{m}\mathfrak{n}). 
\end{split} 
\end{equation*} 
Combining the above estimates, we arrive at the following proposition. 

\begin{Proposition} \label{Vaughanbreaking} 
Let $X\ge q$ be an integer and $\alpha, \beta$ be positive integers such that $\alpha\beta < X$. Let $\mathcal{F}$ be any complex-valued function on the integral ideals in 
$\mathbb{A}$ such that $\mathcal{F}(f)=0$ if $\mathcal{N}(f)\le \beta$. Then we have 
\begin{equation*} 
\sum\limits_{\mathcal{N}(\mathfrak{f})\le X} \Lambda(\mathfrak{f}) \mathcal{F}(\mathfrak{f}) \ll T_{I} + T_{II} 
\end{equation*} 
with 
\begin{equation} \label{typeIsum} 
T_I:=\log_q(X) \sum\limits_{\mathcal{N}(\mathfrak{m}) \le \alpha\beta} \sup\limits_{1\le w\le X/\mathcal{N}(m)} \Bigg| \sum\limits_{\substack{\mathcal{N}(\mathfrak{n})\le w}} \mathcal{F}(\mathfrak{m}\mathfrak{n}) \Bigg| 
\end{equation} 
and 
\begin{equation} \label{typeIIsum} 
T_{II} := \Bigg| \sum\limits_{\beta < \mathcal{N}(\mathfrak{m}) \le X/ \alpha} \quad \sum\limits_{\alpha < \mathcal{N}(\mathfrak{n}) \le X/ \mathcal{N}(\mathfrak{m})} \Lambda(\mathfrak{m})H(\mathfrak{n}) \mathcal{F}(\mathfrak{m}\mathfrak{n})\Bigg|, 
\end{equation} 
where $H(\mathfrak{n})$ is defined as in \eqref{Gdef}. 
\end{Proposition}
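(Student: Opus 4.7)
The setup preceding the proposition already does the essential work: Lemma \ref{Vaughanideals} splits $\sum_{\mathcal{N}(\mathfrak{f})\le X}\Lambda(\mathfrak{f})\mathcal{F}(\mathfrak{f})$ as $S_1+S_2+S_3$, and each $S_i$ has been bounded by an explicit bilinear expression. My plan is therefore to verify that each of these three bounds is subsumed by $T_I$ or $T_{II}$, and then to conclude by the triangle inequality. The hypothesis $\alpha\beta<X$ will be used to absorb logarithmic prefactors into $\log_q X$.

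For $S_1$, the derived bound $\log_q(\alpha\beta)\sum_{\mathcal{N}(\mathfrak{t})\le\alpha\beta}\bigl|\sum_{\mathcal{N}(\mathfrak{n})\le X/\mathcal{N}(\mathfrak{t})}\mathcal{F}(\mathfrak{t}\mathfrak{n})\bigr|$ is converted to $T_I$-form by relabelling $\mathfrak{t}\to\mathfrak{m}$, dominating the inner absolute value by the supremum over $w\in[1,X/\mathcal{N}(\mathfrak{m})]$ (the full sum is realised by the choice $w=X/\mathcal{N}(\mathfrak{m})$), and using $\log_q(\alpha\beta)\le\log_q X$. The bound derived for $S_2$ is already in $T_I$-shape; the only tweak needed is to enlarge the outer range $\mathcal{N}(\mathfrak{l})\le\alpha$ to $\mathcal{N}(\mathfrak{l})\le\alpha\beta$ (which preserves the inequality since the summand is nonnegative) and then rename $\mathfrak{l}$ as $\mathfrak{m}$, matching exactly \eqref{typeIsum}.

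For $S_3$, the bilinear expression
\[
\Bigl|\sum_{\beta<\mathcal{N}(\mathfrak{m})\le X/\alpha}\ \sum_{\alpha<\mathcal{N}(\mathfrak{n})\le X/\mathcal{N}(\mathfrak{m})}\Lambda(\mathfrak{m})H(\mathfrak{n})\mathcal{F}(\mathfrak{m}\mathfrak{n})\Bigr|
\]
is identical to $T_{II}$ as defined in \eqref{typeIIsum}; the cutoff $\mathcal{N}(\mathfrak{n})>\alpha$ arises from the vanishing $H(\mathfrak{n})=0$ for $1<\mathcal{N}(\mathfrak{n})\le\alpha$, combined with the exclusion $\mathcal{N}(\mathfrak{n})>1$ built into $a_3$. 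Summing the three contributions through the triangle inequality then yields the proposition. The only step meriting attention beyond routine bookkeeping is the integration identity $\log_q\mathcal{N}(\mathfrak{n})=\frac{1}{\log q}\int_1^{\mathcal{N}(\mathfrak{n})}dw/w$ used to derive the $S_2$-bound, since this is what permits the logarithmic weight in $a_2$ to collapse into the supremum structure defining $T_I$; this is the point I would flag as the main, if modest, obstacle, and it has already been executed in the preceding text.
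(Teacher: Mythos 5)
Your proposal is correct and follows exactly the route the paper takes: the proposition is precisely the consolidation of the three bounds already derived for $S_1$, $S_2$, $S_3$ between Lemma \ref{Vaughanideals} and the proposition statement, and you identify all the small adjustments needed (replacing $\log_q(\alpha\beta)$ by $\log_q X$, dominating the $S_1$ inner sum by the supremum, enlarging the $S_2$ range from $\alpha$ to $\alpha\beta$ using nonnegativity, and recognizing $|S_3|=T_{II}$ via the vanishing of $H$ for $1<\mathcal{N}(\mathfrak{n})\le\alpha$).
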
 

We note that 
\begin{equation} \label{LambdaGestis} 
\Lambda(\mathfrak{m})\le \log_q \mathcal{N}(\mathfrak{m}) \le \log_q X \quad \mbox{and} \quad |H(\mathfrak{n})|\le 2^{\log_q \mathcal{N}(\mathfrak{m})}\le X^{\log_q 2}. 
\end{equation} 
According to usual terminology, the sum $T_{I}$ above is called type-I sum, and the sum $T_{II}$ is called type-II sum. 

In the following, we apply Proposition \ref{Vaughanbreaking} to the situation when $X:=q^N$ and the function $\mathcal{F}$ satisfies $\mathcal{F}(\mathfrak{a})=0$ if $\mathcal{N}(\mathfrak{a})\not=q^N$ and 
\begin{equation} \label{Fdefi} 
\mathcal{F}(\mathfrak{a})=\frac{\delta^2}{|\sqrt{d}|} \sum\limits_{p\in\mathbb{A}\setminus \{0\}} e\left( \frac{x_1 \sigma_1(vp) - x_2\sigma_2(vp)}{2\sqrt{d}} \right) 
\chi_{B(0,\Delta^{-1})} \left( \frac{ \sigma_1(vp)}{2\sqrt{d}}\right) \chi_{B(0,\Delta^{-1} )} \left(\frac{\sigma_2(vp)}{2\sqrt{d}} \right) 
\end{equation} 
if $\mathcal{N}(\mathfrak{a})=q^N$. Here we recall that $v$ is any generator of $\mathfrak{a}$ and note that 
\begin{equation} \label{anote} 
\frac{\delta^2}{|\sqrt{d}|}= \frac{q^N\Delta^2}{|\sqrt{d}|} 
\end{equation} 
by \eqref{deltadefi}. In the above situation, recalling \eqref{T(N)-til(T(N))}, the said Proposition \ref{Vaughanbreaking} gives 
\begin{equation} \label{diffi} 
\Tilde{\mathcal{T}}(N)-\mathcal{T}(N)\ll T_I+T_{II}, 
\end{equation} 
where the type-I sum takes the form 
\begin{equation}\label{typeIform} 
T_I=\log_q(\alpha\beta) \sum\limits_{\substack{ \mathfrak{t}\in \mathcal{I} \\ \mathcal{N}(\mathfrak{t})\le \alpha\beta }} \Bigg| \sum\limits_{\substack{ \mathfrak{n}\in \mathcal{I} \\ \mathcal{N}(\mathfrak{n}) = q^N/\mathcal{N}(\mathfrak{t})}} \mathcal{F}(\mathfrak{t}\mathfrak{n}) \Bigg|
\end{equation} 
(note that $\mathcal{F}(\mathfrak{tn})=0$ if $\mathcal{N}(\mathfrak{n})<q^N/\mathcal{N}(\mathfrak{t})$), and the type-II sum takes the form 
\begin{equation} \label{typeIIform} 
T_{II}= \Big| \sum\limits_{\substack{\mathfrak{m} \in \mathcal{I} \\ \beta < \mathcal{N}(\mathfrak{m}) \le q^N/ \alpha }} \Lambda(\mathfrak{m}) \quad \sum\limits_{\substack{\mathfrak{n} \in \mathcal{I} \\\alpha < \mathcal{N}(\mathfrak{n}) = q^N/ \mathcal{N}(\mathfrak{m})} } H(\mathfrak{n}) \mathcal{F}(\mathfrak{n}\mathfrak{m})\Big|. 
\end{equation} 

\section{Transformation of the type-I sum} \label{transformationTI} 
The above equation \eqref{typeIform} implies 
\begin{equation*} 
T_I=\log_q(\alpha\beta) \sum\limits_{\substack{i\in \mathbb{N}_0 \\ q^i \le \alpha \beta }} \quad \sum\limits_{\substack{ \mathfrak{t}\in \mathcal{I} \\ \mathcal{N}(\mathfrak{t}) =q^i}} \Bigg| \sum\limits_{\substack{ \mathfrak{n}\in \mathcal{I} \\ \mathcal{N}(\mathfrak{n}) = q^{N-i}}} \mathcal{F}(\mathfrak{t}\mathfrak{n}) \Bigg|, 
\end{equation*} 
where $\mathbb{N}_0 = \mathbb{N} \cup \{0\}$. To simplify matters, we introduce a set $S(\mathfrak{a})$ associated to $\mathfrak{a}\in \mathcal{I}$ as follows. By the Dirichlet unit theorem for real quadratic extensions of $\mathbb{F}_q(T)$ (see \cite[Proposition 14.2]{Ros}), the group $\mathcal{U}$ of units of $\mathbb{A}$ equals 
\begin{equation} \label{units} 
\mathcal{U}=\{ \epsilon u^r : \epsilon \in \mathbb{F}_q^\times, r\in \mathbb{Z} \} 
\end{equation} 
for some unit $u$ with $|u|>1$, called fundamental unit. For $\mathfrak{a}$ an integral ideal in $\mathbb{A}$, we define 
\begin{equation}\label{cardS} 
S(\mathfrak{a}):= \{ v \in \mathbb{A} : (v)=\mathfrak{a} , \,\, \mathcal{N}(\mathfrak{a})^{1/2}|u|^{-1/2} < |v| \le \mathcal{N}(\mathfrak{a})^{1/2}|u|^{1/2} \}. 
\end{equation} 
By \eqref{units}, 
all generators of $\mathfrak{a}$ are of the form $\epsilon u^r v_0$ for a fixed $v_0 \in \mathbb{A}$. 
Thus, the cardinality of $\sharp S(\mathfrak{a})$ is exactly 
\begin{equation} \label{Sidentity} 
\sharp S(\mathfrak{a}) = \sharp\mathbb{F}_q^\times = q-1. 
\end{equation} 

Using the definitions of $\mathcal{F}(\mathfrak{a})$ and $\mathcal{S}(\mathfrak{a})$ above together with the identities \eqref{anote} and \eqref{Sidentity}, it follows that 
\begin{equation*} 
T_I=\frac{q^N \Delta^2\log_q(\alpha\beta)}{|\sqrt{d}|(q-1)^2} \sum\limits_{\substack{i\in \mathbb{N}_0 \\ q^i \le \alpha \beta }} \quad \sum\limits_{\substack{ t \in \mathbb{A} \\ \mathcal{N}(\mathfrak{t}) =q^i \\ t \in S(\mathfrak{t})}} \Bigg| \sum\limits_{\substack{ n\in \mathbb{A} \\ \mathcal{N}(\mathfrak{n}) = q^{N-i} \\ tn \in S(\mathfrak{t}\mathfrak{n}) }} \quad \sum\limits_{p \in \mathbb{A}\setminus \{0\}} \mathcal{E} (tnp) E(tnp)\Bigg|\\ 
\end{equation*} 
with the conventions that $(t)=\mathfrak{t}$ and $(n)=\mathfrak{n}$ and 
\begin{equation} \label{EEdefi} 
\mathcal{E}(l):= e\left( \frac{x_1 \sigma_1(l) - x_2\sigma_2(l)}{2\sqrt{d}} \right), \quad 
E(l):= \chi_{B(0,\Delta^{-1})} \left( \frac{ \sigma_1(l)}{2\sqrt{d}}\right) \chi_{B(0,\Delta^{-1} )} \left(\frac{\sigma_2(l)}{2\sqrt{d}} \right). 
\end{equation} 
Since 
\begin{equation} \label{Eident} 
E(tnp)=\begin{cases} 1 & \mbox{ if } \max\left\{|\frac{ \sigma_1(p) \sigma_1(tn)}{ 2\sqrt{d}}|, |\frac{ \sigma_2(p) \sigma_2(tn)}{ 2\sqrt{d}}|\right\} \le \Delta^{-1} ,\\ 0 & \mbox{ otherwise,} \end{cases} 
\end{equation} 
we deduce that 
\begin{equation} \label{TIequation} 
\begin{split} 
T_I &=\frac{ q^N \Delta^2 \log_q(\alpha\beta)}{|\sqrt{d}|(q-1)^2} \sum\limits_{\substack{i\in \mathbb{N} \\ q^i \le \alpha \beta }} \quad \sum\limits_{\substack{ t \in \mathbb{A} \\ \mathcal{N}(\mathfrak{t}) =q^i \\ t \in S(\mathfrak{t})}} \Bigg| \sum\limits_{\substack{ n\in \mathbb{A} \\ \mathcal{N}(\mathfrak{n}) = q^{N-i} \\ tn \in S(\mathfrak{t}\mathfrak{n}) \\ |\sigma_{1,2}(n)|\le |\sqrt{d}|/(\Delta|\sigma_{1,2}(tp)|)} } \quad \sum\limits_{\substack{p \in \mathbb{A}\setminus \{0\}}} \mathcal{E} (tnp)\Bigg| \\ 
&= \frac{ q^N \Delta^2 \log_q(\alpha\beta)}{|\sqrt{d}|(q-1)^2} \sum\limits_{\substack{i\in \mathbb{N} \\ q^i \le \alpha \beta }} T_I(i), 
\end{split} 
\end{equation} 
where 
\begin{equation} \label{TI(i)defi} 
T_I(i):= 
\sum\limits_{\substack{ t \in \mathbb{A} \\ \mathcal{N}(\mathfrak{t}) =q^i \\ t \in S(\mathfrak{t})}} \Bigg| \sum\limits_{\substack{p \in \mathbb{A}\setminus \{0\}}} \quad \sum\limits_{\substack{ n\in \mathbb{A} \\ \mathcal{N}(\mathfrak{n}) = q^{N-i} \\ tn \in S(\mathfrak{t}\mathfrak{n}) \\ |\sigma_{1,2}(n)|\le |\sqrt{d}|/(\Delta|\sigma_{1,2}(tp)|) }} \mathcal{E} (tnp)\Bigg|. 
\end{equation} 

Using the definition of $\mathcal{S}(\mathfrak{a})$ in \eqref{cardS}, if $tn \in S(\mathfrak{t}\mathfrak{n})$, then 
\begin{equation} \label{presigma} 
\mathcal{N}(\mathfrak{tn})^{1/2} |u|^{-1/2} < |tn| \le \mathcal{N}(\mathfrak{tn})^{1/2} |u|^{1/2}. 
\end{equation} 
Since $\sigma_1(tn)= tn$ and $ | \sigma_1(tn ) \sigma_2(tn)| = \mathcal{N}(\mathfrak{tn}) =q^N$, \eqref{presigma} implies 
\begin{equation}\label{sigma1} 
q^{N/2} |u|^{-1/2} < |\sigma_1(tn)| \le q^{N/2} |u|^{1/2} 
\end{equation} 
and 
\begin{equation}\label{sigma2} 
q^{N/2}|u|^{-1/2} \le |\sigma_2(tn)| < q^{N/2} |u|^{1/2}. 
\end{equation} 
Similarly, if $t\in \mathcal{S}(\mathfrak{t})$ in \eqref{cardS} and $\mathcal{N}(\mathfrak{t})=q^i$, then we have 
\begin{equation} \label{sigma1t} 
q^{i/2} |u|^{-1/2} < |\sigma_{1}(t)| \le q^{i/2} |u|^{1/2} 
\end{equation} 
and 
\begin{equation} \label{sigma2t} 
q^{i/2} |u|^{-1/2} \le |\sigma_{2}(t)| < q^{i/2} |u|^{1/2}. 
\end{equation} 
It follows that 
\begin{equation} \label{sigma12n} 
q^{(N-i)/2} |u|^{-1} < |\sigma_{1,2}(n)| <q^{(N-i)/2} |u|. 
\end{equation} 
Hence, the summation condition $|\sigma_{1,2}(n)|\le |\sqrt{d}|/(\Delta|\sigma_{1,2}(tp)|)$ 
implies the inequality
\begin{equation} \label{sigma12tp} 
|\sigma_{1,2}(tp)|\le |\sqrt{d}u|\Delta^{-1} q^{(i-N)/2}, 
\end{equation} 
which we shall use later on. 

Next, we transform the inner-most sum 
\begin{equation} \label{Vdef} 
V:=\sum\limits_{\substack{ n\in \mathbb{A} \\ \mathcal{N}(\mathfrak{n}) = q^{N-i} \\ tn \in S(\mathfrak{t}\mathfrak{n}) \\ |\sigma_{1,2}(n)|\le |\sqrt{d}|/(\Delta |\sigma_{1,2}(tp)|) }} \mathcal{E} (tnp) 
\end{equation} 
on the right-hand side of \eqref{TI(i)defi}. From \eqref{sigma1} and \eqref{sigma2}, it follows that 
\begin{equation} \label{Veq} 
\begin{split} 
V &= \sum\limits_{\substack{ n\in \mathbb{A} \\ q^{N/2} |u|^{-1/2}/|\sigma_1(t)| < |\sigma_1(n)| \le q^{N/2} |u|^{1/2}/|\sigma_1(t)| \\ |\sigma_2(n)|=q^{N-i}/|\sigma_1(n)| \\ |\sigma_{1,2}(n)|\le |\sqrt{d}|/(\Delta|\sigma_{1,2}(tp)|)}} \mathcal{E} (tnp) \\ 
&= \sum\limits_{\substack{ j\in \mathbb{N}_0 \\ q^{N/2} |u|^{-1/2}/|\sigma_1(t)| < q^j \le q^{N/2} |u|^{1/2}/|\sigma_1(t)| \\ q^j, q^{N-i-j}\le |\sqrt{d}|/(\Delta |\sigma_{1,2}(tp)|)}} V_{j,N-i-j}, 
\end{split} 
\end{equation} 
where 
\begin{equation} \label{Vj1j2defi} 
V_{j_1,j_2}:=\sum\limits_{\substack{ n \in \mathbb{A} \\ |\sigma_1(n)|=q^{j_1} \\ |\sigma_2(n)|= q^{j_2}}} \mathcal{E} (tpn). 
\end{equation} 
We record that the summation conditions $|\sigma_1(n)|=q^{j_1}$ and $|\sigma_2(n)|= q^{j_2}$ together with \eqref{sigma12n} imply the inequalities 
\begin{equation} \label{j1j2estimate} 
q^{(N-i)/2} |u|^{-1} < q^{j_1}, q^{j_2} <q^{(N-i)/2} |u|. 
\end{equation} 
This will be used later on as well. Putting $n= r+s\sqrt{d}$, where $r,s \in \mathbb{F}_q[T]$, the sum in \eqref{Vj1j2defi} becomes 
\begin{equation}\label{V1} 
V_{j_1,j_2} 
= \sum\limits_{\substack{r,s \in \mathbb{F}_q[T]\\ |r+ s\sqrt{d}|=q^{j_1} \\ |r-s\sqrt{d}|= q^{j_2}}} \mathcal{E} (tp(r+s\sqrt{d})) 
= \sum\limits_{s \in \mathbb{F}_q[T]} \quad \sum\limits_{\substack{r \in \mathbb{F}_q[T] \\ r \in \mathbb{S}(-s\sqrt{d},q^{j_1}) \cap \mathbb{S}(s\sqrt{d},q^{j_2})}} \mathcal{E} (tp(r+s\sqrt{d})), 
\end{equation} 
where $ \mathbb{S}(z,q^j):= \{ y \in \mathbb{F}_q(T)_\infty : |z-y| = q^j\}$. 

We have 
\begin{equation*} 
\begin{split} 
\mathcal{E} (tp(r+s\sqrt{d})) &= e\left( \frac{x_1 \sigma_1(tp(r+s\sqrt{d})) - x_2\sigma_2(tp(r+s\sqrt{d}))}{2\sqrt{d}} \right) \\ 
&= e\left( \frac{x_1 \sigma_1(tp)(r+s\sqrt{d}) - x_2\sigma_2(tp)(r-s\sqrt{d})}{2\sqrt{d}} \right) \\ 
&= e(C(tp)r + D(tp) s),\\ 
\end{split} 
\end{equation*} 
where 
\begin{equation} \label{CaDadef} 
C(a):=\frac{ x_1\sigma_1(a) - x_2\sigma_2(a)}{2\sqrt{d}} \quad \mbox{and} \quad D(a):=\frac{ x_1\sigma_1(a)+x_2\sigma_2(a)}{2}. 
\end{equation} 
Therefore the sum in (\ref{V1}) becomes 
\begin{equation*} 
V_{j_1,j_2} = \sum\limits_{s \in \mathbb{F}_q[T]} e(D(tp)s) \quad \sum\limits_{\substack{r \in \mathbb{F}_q[T] \\ r \in \mathbb{S}(-s\sqrt{d},q^{j_1}) \cap \mathbb{S}(s\sqrt{d},q^{j_2})}} e(C(tp) r). 
\end{equation*} 
Using the definition of open balls $B(x,R)$, we write 
\begin{equation*} 
\mathbb{S}(-s\sqrt{d},q^{j_1}) \cap \mathbb{S}(s\sqrt{d},q^{j_2}) =\big( B(-s\sqrt{d}, q^{j_1+1})\setminus B(-s\sqrt{d}, q^{j_1}) \big) \cap \big( B(s\sqrt{d}, q^{j_2+1})\setminus B(s\sqrt{d}, q^{j_2}) \big). 
\end{equation*} 
If $ B(-s\sqrt{d}, q^{j_1+1}) \subseteq B(s\sqrt{d}, q^{j_2+1}),$ then 
\begin{equation*} 
\begin{split} 
\big( B(-s\sqrt{d}, q^{j_1+1})\setminus B(-s\sqrt{d}, q^{j_1}) \big) \cap \big( B(s\sqrt{d}, q^{j_2+1})\setminus B(s\sqrt{d}, q^{j_2}) \big) &= B(-s\sqrt{d}, q^{j_1+1})\setminus B(-s\sqrt{d}, q^{j_1}) \\ 
&= \mathbb{S}(-s\sqrt{d},q^{j_1}).\\ 
\end{split} 
\end{equation*} 
If $ B(s\sqrt{d}, q^{j_2+1}) \subseteq B(-s\sqrt{d}, q^{j_1+1}),$ then 
\begin{equation*} 
\begin{split} 
\big( B(-s\sqrt{d}, q^{j_1+1})\setminus B(-s\sqrt{d}, q^{j_1}) \big) \cap \big( B(s\sqrt{d}, q^{j_2+1})\setminus B(s\sqrt{d}, q^{j_2}) \big) &= B(s\sqrt{d}, q^{j_2+1})\setminus B(s\sqrt{d}, q^{j_2})\\ 
&= \mathbb{S}(s\sqrt{d},q^{j_2}). \\ 
\end{split} 
\end{equation*} 
Moreover, if $B(-s\sqrt{d}, q^{j_1+1})$ and $B(s\sqrt{d}, q^{j_2+1})$ have non-empty intersection, then one of these two balls contains the other. 
Hence, 
\begin{equation} \label{Vj1j2eq} 
V_{j_1,j_2} = V'_{j_1,j_2} + V''_{j_1,j_2} + V'''_{j_1,j_2}, 
\end{equation} 
where 
\begin{equation} \label{V'j1j2} 
V'_{j_1,j_2} := \sum\limits_{\substack{s \in \mathbb{F}_q[T] \\ B(-s\sqrt{d}, q^{j_1+1}) \subsetneqq B(s\sqrt{d}, q^{j_2+1})}} e(D(tp)s) \sum\limits_{\substack{r \in \mathbb{F}_q[T] \\ r \in \mathbb{S}(-s\sqrt{d},q^{j_1})}} e(C(tp)r) , 
\end{equation} 
$$ 
V''_{j_1,j_2}:= \sum\limits_{\substack{s \in \mathbb{F}_q[T] \\ B(-s\sqrt{d}, q^{j_1+1}) = B(s\sqrt{d}, q^{j_2+1})}} e(D(tp)s) \sum\limits_{\substack{r \in \mathbb{F}_q[T] \\ r \in \mathbb{S}(s\sqrt{d},q^{j_2})}} e(C(tp)r) 
$$ 
and 
$$ 
V'''_{j_1,j_2} = \sum\limits_{\substack{s \in \mathbb{F}_q[T] \\ B(s\sqrt{d}, q^{j_2+1}) \subsetneqq B(-s\sqrt{d}, q^{j_1+1})}} e(D(tp)s) \sum\limits_{\substack{r \in \mathbb{F}_q[T] \\ r \in \mathbb{S}(s\sqrt{d},q^{j_2})}} e(C(tp)r). 
$$ 

We write the inner-most sum on the right-hand side of \eqref{V'j1j2} as 
\begin{equation*} 
\begin{split} 
\sum\limits_{\substack{r \in \mathbb{F}_q[T] \\ r \in \mathbb{S}(-s\sqrt{d},q^{j_1})}} e(C(tp)r) &= \sum\limits_{\substack{r \in \mathbb{F}_q[T]}} \chi_{\mathbb{S}(-s\sqrt{d},q^{j_1})}( r) e(C(tp)r). 
\end{split} 
\end{equation*} 
Using the Poisson summation formula, Lemma \ref{Poissonformula}, we obtain 
\begin{equation*} 
\begin{split} 
\sum\limits_{\substack{r \in \mathbb{F}_q[T]}} \chi_{\mathbb{S}(-s\sqrt{d},q^{j_1})}( r) e(C(tp)r) &= \sum\limits_{\substack{r' \in \mathbb{F}_q[T]}} \quad \int\limits_{\mathbb{F}_q(T)_\infty} \chi_{\mathbb{S}(-s\sqrt{d},q^{j_1})}(x) e(C(tp)x)e(-xr') d\mu(x) \\ 
&= \sum\limits_{\substack{r' \in \mathbb{F}_q[T]}} \quad \int\limits_{\mathbb{S}(-s\sqrt{d},q^{j_1})} e((C(tp) - r')x) d\mu(x) \\ 
&= \sum\limits_{\substack{r' \in \mathbb{F}_q[T]}} \quad \int\limits_{\mathbb{S}(0,q^{j_1})} e((C(tp) - r')(x-s\sqrt{d})) d\mu(x)\\ 
&= \sum\limits_{\substack{r' \in \mathbb{F}_q[T]}} e((r'-C(tp))s\sqrt{d}) \quad \int\limits_{\mathbb{S}(0,q^{j_1})} e((C(tp) - r')x) d\mu(x).\\ 
\end{split} 
\end{equation*} 
Using the definition of the Fourier transform and Corollary \ref{Chi1}, we have 
\begin{equation*} 
\begin{split} 
\int\limits_{\mathbb{S}(0,q^{j_1})} e((C(tp) - r')x) d\mu(x) = & \int\limits_{B(0,q^{j_1+1})} e((C(tp) - r')x)d\mu(x) -\int\limits_{B(0,q^{j_1})} e((C(tp) - r')x)d\mu(x) \\ 
= & \int\limits_{B(0,q^{j_1+1})} e((C(tp) - r')x)d\mu(x) -\int\limits_{B(0,q^{j_1})} e((C(tp) - r')x)d\mu(x) \\ 
= & \int\limits_{\mathbb{F}_q(T)_\infty} \chi_{B(0,q^{j_1+1})}(x) e(-(r' -C(tp))x)d\mu(x) \\ & 
-\int\limits_{\mathbb{F}_q(T)_\infty} \chi_{B(0,q^{j_1})}(x) e(-(r' -C(tp))x)d\mu(x) \\ 
= & \hat{\chi}_{B(0,q^{j_1+1})} (r' -C(tp)) - \hat{\chi}_{B(0,q^{j_1})} (r' -C(tp)) \\ 
= & q^{j_1+1}\chi_{B(0,q^{-(j_1+1)})} (r' -C(tp)) -q^{j_1}\chi_{B(0,q^{-j_1})} (r' -C(tp)). 
\end{split} 
\end{equation*} 
It follows that 
\begin{equation*} 
\sum\limits_{\substack{r \in \mathbb{F}_q[T]}} \chi_{\mathbb{S}(-s\sqrt{d},q^{j_1})}( r) e(C(tp)r) = \sum\limits_{\substack{r' \in \mathbb{F}_q[T]}} e((r'-C(tp))s\sqrt{d}) \, G_{(r',j_1)}, 
\end{equation*} 
where 
\begin{equation*} 
G_{(r',k)} := q^{k+1}\chi_{B(0,q^{-(k+1)})} (r' -C(tp)) -q^{k}\chi_{B(0,q^{-k})}(r' -C(tp)). 
\end{equation*} 
Combining the above equations and observing that 
$$ B(-s\sqrt{d}, q^{j_1+1}) \subsetneqq B(s\sqrt{d}, q^{j_2+1}) \Longleftrightarrow |s\sqrt{d}|< q^{j_2+1} \text{ and } j_1 < j_2,$$ 
we obtain 
\begin{equation*} 
V'_{j_1,j_2}= \delta_1(j_1, j_2) \sum\limits_{\substack{r' \in \mathbb{F}_q[T]}} G_{(r',j_1)} \sum\limits_{\substack{s \in \mathbb{F}_q[T] \\ s\in B(0, q^{j_2+1}/|\sqrt{d}|) } } e((D(tp)+ (r'-C(tp))\sqrt{d})s), 
\end{equation*} 
where 
\begin{equation*} 
\delta_1(j_1, j_2) := \begin{cases} 1 & \mbox{ if } j_1< j_2 ,\\ 0 & \mbox{ otherwise.} \end{cases} 
\end{equation*} 
Similarly we deduce that 
\begin{equation*} 
V'''_{j_1,j_2} = \delta_1(j_2, j_1) \sum\limits_{\substack{r' \in \mathbb{F}_q[T]}} G_{(r',j_2)} \sum\limits_{\substack{s \in \mathbb{F}_q[T] \\ s\in B(0, q^{j_1+1}/|\sqrt{d}|) }} e((D(tp)+ (C(tp)-r')\sqrt{d})s) 
\end{equation*} 
and 
\begin{equation*} 
V''_{j_1,j_2}= \delta(j_1, j_2) \sum\limits_{\substack{r' \in \mathbb{F}_q[T]}} G_{(r',j_2)} \sum\limits_{\substack{s \in \mathbb{F}_q[T] \\ s\in B(0, q^{j_2+1}/|\sqrt{d}|) }} e((D(tp)+ (C(tp)-r')\sqrt{d})s), 
\end{equation*} 
where 
\begin{equation*} 
\delta(j_1, j_2) := \begin{cases} 1 & \mbox{ if } j_1= j_2 ,\\ 0 & \mbox{ otherwise.} \end{cases} 
\end{equation*} 
Again using the Poisson summation formula, we get 
\begin{equation*} 
\sum\limits_{\substack{s \in \mathbb{F}_q[T] \\ s\in B(0, q^{j_2+1}/|\sqrt{d}|) } } e((D(tp)+ (r'-C(tp))\sqrt{d})s) = |\sqrt{d}| \sum\limits_{\substack{s' \in \mathbb{F}_q[T]} } q^{j_2+1}\chi_{B(0,q^{-(j_2+1)}|\sqrt{d}|)} (s' - D(tp)+ (C(tp) -r')\sqrt{d}). 
\end{equation*} 
Thus $V'_{j_1,j_2}$ becomes 
\begin{equation} \label{V'eq} 
V'_{j_1,j_2} = \delta_1(j_1, j_2) |\sqrt{d}| \sum\limits_{\substack{r' \in \mathbb{F}_q[T]}} G_{(r',j_1)} \sum\limits_{\substack{s' \in \mathbb{F}_q[T]} } q^{j_2+1}\chi_{B(0,q^{-(j_2+1)}|\sqrt{d}|)} (s' - D(tp)- (r'-C(tp))\sqrt{d}). 
\end{equation} 
Similarly, $V''_{j_1,j_2}$ and $V'''_{j_1,j_2}$ become 
\begin{equation} \label{V''eq} 
V''_{j_1,j_2}= \delta(j_1, j_2) |\sqrt{d}| \sum\limits_{\substack{r' \in \mathbb{F}_q[T]}} G_{(r',j_2)} \sum\limits_{\substack{s' \in \mathbb{F}_q[T]} } q^{j_1+1}\chi_{B(0,q^{-(j_1+1)}|\sqrt{d}|)} (s' - D(tp)+ (r'-C(tp))\sqrt{d}) 
\end{equation} 
and 
\begin{equation} \label{V'''eq} 
V'''_{j_1,j_2}= \delta_1(j_2, j_1) |\sqrt{d}| \sum\limits_{\substack{r' \in \mathbb{F}_q[T]}} G_{(r',j_2)} \sum\limits_{\substack{s' \in \mathbb{F}_q[T]} } q^{j_1+1}\chi_{B(0,q^{-(j_1+1)}|\sqrt{d}|)} (s' - D(tp)+ (r'-C(tp))\sqrt{d}). 
\end{equation} 
From \eqref{V'eq}, we infer the estimate 
\begin{equation} \label{V'ineq} 
\begin{split} 
|V'_{j_1,j_2}| \le & q^{j_1+ j_2 +2} |\sqrt{d}| \sum\limits_{r' \in \mathbb{F}_q[T]} \chi_{B(0,q^{-j_1})}(r' -C(tp)) \sum\limits_{\substack{s' \in \mathbb{F}_q[T]} } \chi_{B(0,q^{-j_1}|\sqrt{d}|)} (s' - D(tp)- (r'-C(tp))\sqrt{d}) \\ 
= & q^{j_1+j_2+2} |\sqrt{d}| \sum\limits_{\substack{r' \in \mathbb{F}_q[T] \\s' \in \mathbb{F}_q[T]} } \chi_{B(0,q^{-j_1})}(r' -C(tp)) \chi_{B(0,q^{-j_1} |\sqrt{d}|)} (s' - D(tp)) \\ 
= & q^{j_1+j_2+2} |\sqrt{d}| \chi_{B(0,q^{-j_1})}( \Vert C(tp) \Vert) \chi_{B(0,q^{-j_1} |\sqrt{d}|)} ( \Vert D(tp) \Vert ). 
\end{split} 
\end{equation} 
Similarly, from \eqref{V''eq} and \eqref{V'''eq}, we infer 
\begin{equation} \label{V''ineq} 
|V''_{j_1,j_2}| \le q^{j_1+j_2+2} \chi_{B(0,q^{-j_1})}( \Vert C(tp) \Vert) \chi_{B(0,q^{-j_1} |\sqrt{d}|)} ( \Vert D(tp) \Vert ) 
\end{equation} 
and 
\begin{equation} \label{V'''ineq} 
|V'''_{j_1,j_2}| \le q^{j_1+j_2+2} \chi_{B(0,q^{-j_2})}( \Vert C(tp) \Vert) \chi_{B(0,q^{-j_2} |\sqrt{d}|)} ( \Vert D(tp) \Vert ). 
\end{equation} 
Combining \eqref{Vj1j2eq}, \eqref{V'ineq}, \eqref{V''ineq} and \eqref{V'''ineq}, we obtain 
\begin{equation} \label{Vj1j2finalbound} 
|V_{j_1,j_2}|\ll q^{j_1+j_2} \left(\chi_{B(0,q^{-j_1})}( \Vert C(tp) \Vert) \chi_{B(0,q^{-j_1} |\sqrt{d}|)} ( \Vert D(tp) \Vert ) +\chi_{B(0,q^{-j_2})}( \Vert C(tp) \Vert) \chi_{B(0,q^{-j_2} |\sqrt{d}|)} ( \Vert D(tp) \Vert ) \right). 
\end{equation} 
Setting $j_1=j$ and $j_2=N-i-j$ and combining \eqref{TI(i)defi}, \eqref{sigma12tp}, \eqref{Vdef}, \eqref{Veq} and \eqref{Vj1j2finalbound}, we deduce that 
\begin{equation}\label{T.11} 
\begin{split} 
T_I(i) \ll & q^{N-i} \quad \sum\limits_{\substack{ t \in \mathbb{A} \\ \mathcal{N}(\mathfrak{t}) =q^i \\ t \in S(\mathfrak{t})}}\ \sum\limits_{\substack{p \in \mathbb{A}\setminus \{0\}\\ |\sigma_{1,2}(tp)|\le |\sqrt{d}u|\Delta^{-1} q^{(i-N)/2}}}\ \sum\limits_{\substack{ j\in \mathbb{N}_0 \\ q^{N/2} |u|^{-1/2}/|\sigma_1(t)| < q^j \le q^{N/2} |u|^{1/2}/|\sigma_1(t)| \\ q^{j}, q^{N-i-j}\le |\sqrt{d}|/(\Delta |\sigma_{1,2}(tp)|)}} \\ 
& \left(\chi_{B(0,q^{-j})}( \Vert C(tp) \Vert) \chi_{B(0,q^{-j} |\sqrt{d}|)} ( \Vert D(tp) \Vert ) +\chi_{B(0,q^{i+j-N})}( \Vert C(tp) \Vert) \chi_{B(0,q^{i+j-N} |\sqrt{d}|)} ( \Vert D(tp) \Vert ) \right). 
\end{split} 
\end{equation} 
Next, we write $k=tp$ and note that 
$$ 
B(0,q^{-j}),B(0,q^{i+j-N}),B(0,q^{-j} |\sqrt{d}|),B(0,q^{i+j-N} |\sqrt{d}|) 
\subseteq B(0,|u\sqrt{d}|q^{(i-N)/2}) 
$$ 
by \eqref{j1j2estimate}, thus getting 
\begin{equation}\label{T1(i).25} 
T_I(i) \ll q^{N-i} \sum\limits_{\substack{k \in \mathbb{A}\setminus \{0\}\\ |\sigma_{1,2}(k)|< \Delta^{-1}R(i) }} \chi_{B(0, R(i))} \left( \left| \left| \frac{ x_1\sigma_1(k) - x_2\sigma_2(k)}{2\sqrt{d}} \right| \right| \right) \chi_{B(0, R(i))} \left( \left| \left| \frac{ x_1\sigma_1(k)+x_2\sigma_2(k)}{2} \right| \right| \right), 
\end{equation} 
where 
\begin{equation} \label{Ridef} 
R(i):=|u\sqrt{d}|q^{(i-N)/2}. 
\end{equation} 

\section{Counting Process} 
Assume we have simultaneous Diophantine approximation of the pair $(x_1, x_2)$ by a pair $(\sigma_1(\theta),\sigma_2(\theta_2))$ with $\theta\in K=\mathbb{F}_q(T)(\sqrt{d})$ satisfying 
\begin{equation} \label{Diophant} 
|x_i - \sigma_i(\theta)|\le \Delta_1 \text{ for } i=1,2 
\end{equation} 
for a suitable $\Delta_1>0$. Then \eqref{T1(i).25} gives us 
\begin{equation} \label{sum T_I(i) C1} 
\begin{split} 
T_I(i) \ll & q^{N-i} \sum\limits_{\substack{k \in \mathbb{A}\setminus \{0\}\\ |\sigma_{1,2}(k)|< \Delta^{-1}R(i)}} \chi_{B(0, \tilde{R}(i))} \left( \left| \left| \frac{ \sigma_1(k\theta) - \sigma_2(k\theta)}{2\sqrt{d}} \right| \right| \right) \chi_{B(0, \tilde{R}(i))} \left( \left| \left| \frac{ \sigma_1(k\theta)+\sigma_2(k \theta)}{2} \right| \right| \right), \\ 
= & q^{N-i} 
\sum\limits_{\substack{k \in \mathbb{A}\setminus \{0\}\\ |\sigma_{1,2}(k)|< \Delta^{-1}R(i)}} \chi_{B(0, \tilde{R}(i))} (||\Im(k\theta)||) \chi_{B(0, \tilde{R}(i))} (||\Re(k\theta)||), 
\end{split} 
\end{equation} 
where 
\begin{equation} \label{tildeRidef} 
\tilde{R}(i)= (1+ \Delta_1 \Delta^{-1})R(i) 
\end{equation} 
and $\Re(f)$ and $\Im(f)$ are defined as in \eqref{RealImaginary}. 
We write 
\begin{equation} \label{Defi of theta} 
\theta = \frac{u+v\sqrt{d}}{f+g\sqrt{d}} 
\end{equation} 
with $u,v,f,g \in \mathbb{F}_q[T]$, where $u+v\sqrt{d}$ and $f+g\sqrt{d}$ are relatively prime in $\mathbb{A}$. By Theorem \ref{Dirichlet1}, there are infinitely many $\theta\in K$ such that \eqref{Diophant} holds with 
\begin{equation} \label{Defi of |W|} 
\Delta_1:= \frac{c_1}{W}, 
\end{equation} 
where 
$$ 
W:= \mathcal{N}(f+g\sqrt{d}) 
$$ 
and $c_1>0$ is a suitable constant. 

Next, we break the sum in the last line of \eqref{sum T_I(i) C1} into 
$$ 
O\left(\frac{R(i)^2}{\Delta^2 W} +1\right) 
$$ 
subsums of the form 
\begin{equation} \label{broken sum in T_1} 
\Sigma(A_1,A_2)= \sum\limits_{\substack{k \in \mathbb{A}\setminus \{0\}\\ \sigma_1(k) \in B\left(A_1, \sqrt{\mathcal{N}(f+g\sqrt{d})}\right) \\ \sigma_2(k) \in B\left(A_2, \sqrt{\mathcal{N}(f+g\sqrt{d})}\right) }} \chi_{B(0, \tilde{R}(i))} (||\Im(k\theta)||) \chi_{B(0, \tilde{R}(i))} (||\Re(k\theta)||) 
\end{equation} 
for suitable $A_1, A_2\in \mathbb{F}_q(t)_{\infty}$. We claim that for $k$ running over $\mathbb{A}$ and satisfying 
$$ 
\sigma_1(k) \in B\left(A_1, \sqrt{\mathcal{N}(f+g\sqrt{d})}\right) 
$$ 
and 
$$ 
\sigma_2(k) \in B\left(A_2, \sqrt{\mathcal{N}(f+g\sqrt{d})}\right), 
$$ 
$k\theta (f+g\sqrt{d})=k(u+v\sqrt{d})$ covers every residue class $\bmod (f+g\sqrt{d})$ at most once. This can be seen as follows. If 
$$ 
k_1(u+v\sqrt{d}) \equiv k_2(u+v\sqrt{d}) \bmod (f+g\sqrt{d}), 
$$ 
then 
$$ 
k_1- k_2 \equiv 0 \bmod (f+g\sqrt{d}) 
$$ 
since $u+v\sqrt{d}$ and $f+g\sqrt{d}$ are relatively prime. 
We have 
$$ 
|\sigma_1(k_1 - k_2)|= |\sigma_1(k_1) -\sigma_1(k_2)| \leq \max \{ |\sigma_1(k_1)-A_1|, |\sigma_1(k_2)- A_1| \} < \sqrt{\mathcal{N}(f+g\sqrt{d})}. 
$$ 
Similarly, 
$$ 
|\sigma_2(k_1 - k_2)| < \sqrt{\mathcal{N}(f+g\sqrt{d})}. 
$$ 
Thus 
$$ 
\mathcal{N}(k_1-k_2) = |\sigma_1(k_1 - k_2)||\sigma_2(k_1 - k_2)| < \mathcal{N}(f+g\sqrt{d}). 
$$ 
However, if $k_1 \neq k_2$, then $\mathcal{N}(f+g\sqrt{d}) \mid \mathcal{N}(k_1-k_2)$ which implies $\mathcal{N}(f+g\sqrt{d}) \leq \mathcal{N}(k_1-k_2)$. Thus, we have reached a contradiction. Therefore $k_1 = k_2$, which establishes the above claim. It follows that 
\begin{equation} \label{SigmaA1A2esti} 
\begin{split} 
\Sigma(A_1,A_2) \leq & \sharp \left\{ n \in \mathbb{A}: \mathfrak{R}\left(\frac{n}{f+g\sqrt{d}}\right), \mathfrak{I}\left(\frac{n}{f+g\sqrt{d}}\right) \in B(0, \tilde{R}(i)) \right\}\\ 
\leq & \sharp \left\{ n \in \mathbb{A} : \sigma_1\left(\frac{n}{f+g\sqrt{d}}\right), \sigma_2\left(\frac{n}{f+g\sqrt{d}}\right) \in B(0, |\sqrt{d}| \tilde{R}(i)) \right\}. 
\end{split} 
\end{equation} 

The set 
\begin{equation*} 
\Lambda(f+ g\sqrt{d}):= \left\{ \left(\sigma_1\left( \frac{n}{f+g\sqrt{d}}\right), \sigma_2\left(\frac{n}{f+g\sqrt{d}}\right)\right)\in \mathbb{F}_q(t)_{\infty} : n \in \mathbb{A} \right\} 
\end{equation*} 
forms a lattice in $K_\infty^2$ with covolume $|\sqrt{d}|/W$. With this notation, \eqref{SigmaA1A2esti} simplifies into 
\begin{equation} \label{SigmaA1A2esti2} 
\begin{split} 
\Sigma(A_1,A_2) \leq \sharp \left(\Lambda(f+ g\sqrt{d}) \cap B_2(0, |\sqrt{d}| \tilde{R}(i))\right). 
\end{split} 
\end{equation} 

\subsection{Counting lattice points} 
Now we aim to establish the bound 
\begin{equation}\label{count Lattice point} 
\sharp \left(\Lambda(f+ g\sqrt{d}) \cap B_2(0, |\sqrt{d}| \tilde{R}(i))\right)\ll W\tilde{R}(i)^2 +1. 
\end{equation} 
Here the crucial point is that without loss of generality, $f+g\sqrt{d}$ can be chosen in such a way that 
\begin{equation} \label{fgassump} 
|\sigma_1(f+g\sqrt{d})| \asymp \sqrt{\mathcal{N}(f+g\sqrt{d})} \quad \mbox{and} \quad |\sigma_2(f+g\sqrt{d})| \asymp \sqrt{\mathcal{N}(f+g\sqrt{d})}. 
\end{equation} 
This situation can be achieved by multiplying both the denominator and numerator in the fraction on the right-hand side of \eqref{Defi of theta} by a suitable unit. 

We have 
\begin{equation} \label{count lattice 2} 
\begin{split} 
& \left(\Lambda(f+ g\sqrt{d}) \cap B_2(0, |\sqrt{d}| \tilde{R}(i))\right)\\ 
= & \left\{ n \in \mathbb{A} : \frac{1}{W} \left( \sigma_1(n)\sigma_2(f+g\sqrt{d}), \sigma_2(n)\sigma_1(f+g\sqrt{d}) \right) \in B_2(0, |\sqrt{d}| \tilde{R}(i)) \right\} \\ 
= & \left\{ (a,b)\in \mathbb{F}_q[T]^2 : \left( (a+b\sqrt{d})(f-g\sqrt{d}), (a-b\sqrt{d})(f+g\sqrt{d}) \right) \in B_2\left( 0, W|\sqrt{d}| \tilde{R}(i)\right) \right\}.
\end{split} 
\end{equation} 
Since we may choose $f+g\sqrt{d}$ in such a way that \eqref{fgassump} is satisfied, the condition 
$$ 
\left( (a+b\sqrt{d})(f-g\sqrt{d}), (a-b\sqrt{d})(f+g\sqrt{d}) \right) \in B_2\left( 0, W|\sqrt{d}| \tilde{R}(i)\right) 
$$ 
implies 
\begin{equation*} 
|a+b\sqrt{d}| \ll_d \sqrt{W}\tilde{R}(i)\quad \mbox{and} \quad 
|a-b\sqrt{d}| \ll_d \sqrt{W}\tilde{R}(i) 
\end{equation*} 
and further 
\begin{equation*} 
|a| \ll_d \sqrt{W}\tilde{R}(i) \quad \mbox{and} \quad 
|b| \ll_d \sqrt{W}\tilde{R}(i). 
\end{equation*} 
From \eqref{count lattice 2} and the above, we deduce that 
\begin{equation*} 
\begin{split} 
\sharp \left(\Lambda(f+ g\sqrt{d}) \cap B_2(0, |\sqrt{d}| \tilde{R}(i))\right) 
\ll & \sharp \left\{ (a,b)\in \mathbb{F}_q[T]^2 : |a|,|b|\le C(d)\sqrt{W}\tilde{R}(i) \right\} \\ \ll & W\tilde{R}(i)^2+1, 
\end{split} 
\end{equation*} 
where $C(d)$ is a suitable constant depending only on $d$. This establishes the claim \eqref{count Lattice point}. 

\subsection{Estimating $T_{I}(i)$} 
Combining \eqref{sum T_I(i) C1}, \eqref{SigmaA1A2esti2} and \eqref{count Lattice point}, we get 
\begin{equation} \label{Estimate 1 of T_1} 
T_I(i) \ll q^{N-i} \left(\frac{R(i)^2}{\Delta^2W} +1\right)\left(W\tilde{R}(i)^2 +1\right). 
\end{equation} 
Recalling \eqref{Ridef} and \eqref{tildeRidef}, we deduce that 
\begin{equation*} 
T_I(i) \ll q^{N-i} \left(\left(q^{N-i}\Delta^2W\right)^{-1} +1\right) \left(\left(1+\Delta_1^2\Delta^{-2}\right)Wq^{i-N} +1 \right). 
\end{equation*} 
Further, recalling \eqref{deltadefi} and \eqref{Defi of |W|}, this implies 
\begin{equation}\label{T_I(i) bound final} 
\frac{ q^N \Delta^2 \log_q(\alpha\beta)}{|\sqrt{d}|(q-1)^2}\cdot T_I(i) \ll \log_q(\alpha\beta) \left( \left( 1+ \delta^{-2}W^{-2}q^N \right) q^i + 
\left(W^{-1}q^N + \delta^2W\right)+ \delta^2 q^{N-i}\right), 
\end{equation} 
which, by \eqref{TIequation}, is the contribution of $i$ to $T_I$. 

\subsection{Excluding small $i$'s} 
If $i$ is small then the last term in the estimate \eqref{T_I(i) bound final} is of size about $\log_q(\alpha\beta) \delta^2 q^{N}$, which is too large to get a non-trivial estimate for $T_I$. In this subsection, we will see that small $i$'s can be excluded, i.e. the sum $T_I(i)$ is actually empty for them. To this end, taking \eqref{sum T_I(i) C1} into account, it suffices to bound the sizes of $\max\{||\mathfrak{R}(k\theta)||,|| \mathfrak{I}(k\theta)||\}$ from below. 

Recall \eqref{Defi of theta}. Let the lattice $\tilde{\Lambda}(f+g\sqrt{d})$ be defined by 
$$ 
\tilde{\Lambda}(f+g\sqrt{d}):=\left\{\left(\Re\left(\frac{n}{f+g\sqrt{d}}\right),\Im\left(\frac{n}{f+g\sqrt{d}}\right)\right)\in \mathbb{F}_q(t)_{\infty}^2 : n\in \mathbb{A}\right\}. 
$$ 
For simplicity, we write $\Lambda=\Lambda(f+g\sqrt{d})$ and $\tilde{\Lambda}=\tilde{\Lambda}(f+g\sqrt{d})$ throughout the following. 
Clearly, if $k\in \mathbb{A}$, then $(\Re(k\theta),\Im(k\theta))\in \tilde{\Lambda}$. Moreover, $F_q[T]^2\subseteq \tilde{\Lambda}$. Further, if 
\begin{equation} \label{kcondi} 
0<\mathcal{N}(k)<W=\mathcal{N}(f+g\sqrt{d}), 
\end{equation} 
then $k\theta\not\in F_q[T]^2$. Under this condition, it follows that 
$$ 
\max\{||\mathfrak{R}(k\theta)||,|| \mathfrak{I}(k\theta)||\}\ge \min\limits_{(\alpha,\beta)\in \tilde{\Lambda}\setminus \{0\}} \max\{|\alpha|,|\beta|\}. 
$$ 
To estimate the right-hand side, we note that 
\begin{equation*} 
\begin{split} 
\min\limits_{(\alpha,\beta)\in\tilde{\Lambda}\setminus \{0\}} \max\{|\alpha|,|\beta|\}= & 
\min\limits_{(\mu,\nu)\in \Lambda\setminus \{0\}} \max\left\{\left|\frac{\mu+\nu}{2}\right|,\left|\frac{\mu-\nu}{2\sqrt{d}}\right|\right\}\\ 
\ge & \frac{1}{|\sqrt{d}|} \min\limits_{(\mu,\nu)\in \Lambda\setminus \{0\}} \max\left\{\left|\frac{\mu+\nu}{2}\right|,\left|\frac{\mu-\nu}{2}\right|\right\}\\ 
= & \frac{1}{|\sqrt{d}|} \min\limits_{(\mu,\nu)\in \Lambda\setminus \{0\}} \max\left\{|\mu|,|\nu|\right\}\\ 
= & \frac{1}{|\sqrt{d}|W}\min\limits_{a+b\sqrt{d} \in \mathbb{A}\setminus\{0\}} \max \left\{ \left| (f-g\sqrt{d})(a+b \sqrt{d}) \right|, \left|(f+g\sqrt{d})(a-b\sqrt{d}) \right| \right\}\\ 
\gg & \frac{1}{\sqrt{W}}\min\limits_{a+b\sqrt{d} \in \mathbb{A}\setminus\{0\}} \max \left\{ \left| a+b \sqrt{d} \right|, \left| a-b\sqrt{d} \right| \right\} \\ 
\gg & \frac{1}{\sqrt{W}}\min\limits_{(a,b) \in \mathbb{F}_q[T]\setminus\{0\}} \max \left\{ \left| a\right|, \left| b\right| \right\}\\ 
\gg & \frac{1}{\sqrt{W}}, 
\end{split} 
\end{equation*} 
where we use \eqref{fgassump}. In view of \eqref{sum T_I(i) C1}, we deduce that $T_I(i)=0$ if 
$$ 
\Delta^{-1} R(i)\le W^{1/2} \quad \mbox{and} \quad \tilde{R}(i) < c_2W^{-1/2} 
$$ 
for a suitable constant $c_2>0$. Considering \eqref{deltadefi}, \eqref{Ridef}, \eqref{tildeRidef} and \eqref{Defi of |W|}, this is the case if $q^i\le J$, where 
$$ 
J:= c_3\min\left\{\delta^2 W,W^{-1}q^N\right\} 
$$ 
for a suitable constant $c_3>0.$ 

\subsection{Estimating $T_I$} 
Using \eqref{TIequation}, \eqref{T_I(i) bound final}, the above considerations and 
$\log_q(\alpha\beta)\ll N$, we get 
\begin{equation} \label{T_1 final sum} 
\begin{split} 
T_I \ll & N \sum\limits_{\substack{i\in \mathbb{N}_0 \\ J < q^i \le \alpha \beta }} 
\left( \left( 1+ \delta^{-2}W^{-2}q^N \right) q^i + 
\left(W^{-1}q^N + \delta^2W\right)+ \delta^2 q^{N-i}\right) \\ 
\ll & N^2\left(\left( 1+ \delta^{-2}W^{-2}q^N \right) \alpha\beta+ W^{-1}q^N + \delta^2W\right). 
\end{split} 
\end{equation}

\section{Transformation of the type-II sum} 
Recall \eqref{typeIIform}. Using the definitions of $\mathcal{F}(\mathfrak{a})$ in \eqref{Fdefi} in the case $\mathcal{N}(\mathfrak{a})\not=q^N$ and $\mathcal{S}(\mathfrak{a})$ in \eqref{cardS} together with the identities \eqref{anote} and \eqref{Sidentity}, it follows that 
\begin{equation} \label{TIIsplit} 
T_{II} \le 
\frac{q^N \Delta^2}{|\sqrt{d}| (q-1)^2} \cdot \sum\limits_{\substack{i\in \mathbb{N}_0 \\ \beta < q^i \le q^N/ \alpha}} T_{II}(i) 
\end{equation} 
with 
$$ 
T_{II}(i):= \Big| \sum\limits_{\substack{m \in \mathbb{A} \\ \mathcal{N}(\mathfrak{m})= q^i \\ m \in S(\mathfrak{m}) }} \Lambda(\mathfrak{m}) \quad \sum\limits_{\substack{ n \in \mathbb{A} \\ \mathcal{N}(\mathfrak{n}) = q^{N-i} \\ mn \in S(\mathfrak{mn}) }} H(\mathfrak{n}) \quad \sum\limits_{p\in \mathbb{A}\setminus \{0\}} \mathcal{E} (mn p) E(mn p) \Big|, 
$$ 
where $E(l)$ and $\mathcal{E}(l)$ are defined as in \eqref{EEdefi}. Using \eqref{Eident} with $t$ replaced by $m$, we deduce that 
\begin{equation*} 
\begin{split} 
T_{II}(i) = & \Big| \sum\limits_{\substack{m \in \mathbb{A} \\ \mathcal{N}(\mathfrak{m})= q^i \\ m \in S(\mathfrak{m}) }} \Lambda(\mathfrak{m}) \sum\limits_{\substack{n \in \mathbb{A} \\ \mathcal{N}(\mathfrak{n}) = q^{N-i} \\ mn \in S(\mathfrak{mn}) \\ |\sigma_{1,2}(n)|\le |\sqrt{d}|/(\Delta|\sigma_{1,2}(mp)|) }} H(\mathfrak{n}) \quad \sum\limits_{\substack{p\in \mathbb{A}\setminus \{0\} }} \mathcal{E} (mn p)\Big| \\ 
\le & \sum\limits_{\substack{p\in \mathbb{A}\setminus \{0\} }} \,\, \sum\limits_{\substack{m \in \mathbb{A} \\ \mathcal{N}(\mathfrak{m})= q^i \\ m \in S(\mathfrak{m}) }} \Lambda(\mathfrak{m})\cdot \Big| \sum\limits_{\substack{n\in \mathbb{A} \\ \mathcal{N}(\mathfrak{n}) = q^{N-i} \\ mn \in S(\mathfrak{mn})\\ |\sigma_{1,2}(n)|\le |\sqrt{d}|/(\Delta|\sigma_{1,2}(mp)|) }} H(\mathfrak{n}) \mathcal{E} (mn p) \Big|. 
\end{split} 
\end{equation*} 

As $mn\in S(\mathfrak{mn})$ and $\mathcal{N}(\mathfrak{mn})=q^N,$ we have 
\begin{equation}\label{SigmaNM} 
q^{N/2}|u|^{-1/2} < |\sigma_{1,2}(mn)|< q^{N/2}|u|^{1/2}. 
\end{equation} 
Similarly, as $m\in S(\mathfrak{m})$ and $\mathcal{N}(\mathfrak{m})=q^i$, we have 
\begin{equation}\label{SigmaM} 
q^{i/2}|u|^{-1/2} < |\sigma_{1,2}(m)|< q^{i/2}|u|^{1/2}. 
\end{equation} 
From \eqref{SigmaNM}, \eqref{SigmaM} and $|\sigma_1(m)\sigma_2(m)|=q^i$, we get 
\begin{equation*} 
q^{(N-i)/2}|u|^{-1} < |\sigma_{1,2}(n)|< q^{(N-i)/2}|u|. 
\end{equation*} 
Writing $k=mp$, we deduce that 
\begin{equation*} 
\begin{split} 
T_{II}(i) & \le \sum\limits_{\substack{k\in \mathbb{A}\setminus \{0\} \\ |\sigma_{1,2}(k)|\le \Delta^{-1} |\sqrt{d}u| q^{(i-N)/2}}} \,\, a_k \cdot \Big| \sum\limits_{\substack{n\in \mathbb{A} \\ \mathcal{N}(\mathfrak{n}) = q^{N-i} \\ q^{(N-i)/2}|u|^{-1} < |\sigma_{1,2}(n)|< q^{(N-i)/2}|u| \\ |\sigma_{1,2}(n)|\le |\sqrt{d}|/(\Delta|\sigma_{1,2}(k)|)}} H(\mathfrak{n}) \mathcal{E} (nk ) \Big|,\\ 
\end{split} 
\end{equation*} 
where 
$$ 
a_k := \sum\limits_{\substack{(m, p) \in \mathbb{A}^2 \\ mp=k \\ m \in S(\mathfrak{m}) }} \Lambda(\mathfrak{m}). 
$$ 
Applying the Cauchy- Schwarz inequality, we obtain 
\begin{equation}\label{Sum2.64} 
T_{II}(i)^2 \le A(i) B(i), 
\end{equation} 
where 
$$ 
A(i):= \sum\limits_{\substack{k\in \mathbb{A}\setminus \{0\} \\ |\sigma_{1,2}(k)|\le \Delta^{-1} |\sqrt{d} u| q^{(i-N)/2}}} |a_k|^2 
$$ 
and 
$$ 
B(i):=\sum\limits_{\substack{k\in \mathbb{A}\\ |\sigma_{1,2}(k)|\le \Delta^{-1} |\sqrt{d} u| q^{(i-N)/2}}} \Big| \sum\limits_{\substack{n\in \mathbb{A} \\ \mathcal{N}(\mathfrak{n}) = q^{N-i} \\ q^{(N-i)/2}|u|^{-1} < |\sigma_{1,2}(n)|< q^{(N-i)/2}|u| \\ |\sigma_{1,2}(n)|\le |\sqrt{d}|/(\Delta|\sigma_{1,2}(k)|)}} H(\mathfrak{n}) \mathcal{E} (nk ) \Big|^2. 
$$ 
Note that we have now included the contribution of $k=0$, which is possible due to non-negativity of the modulus on the right-hand side. 

In the following, we bound the term $A(i)$. We may express $a_k$ explicitly as 
$$ 
a_k = \sum\limits_{\substack{m \in \mathbb{A} \\ m \mid k \\ m \in S(\mathfrak{m}) }} \Lambda(\mathfrak{m}) = (q-1) \log_q(\mathcal{N}(k)), 
$$ 
which implies 
\begin{equation}\label{Bound of A} 
A(i) \ll \log_q^2(\mathcal{N}(k)) \left(\Delta^{-1} q^{(i-N)/2}\right)^2 
=N^2 \Delta^{-2} q^{i-N}. 
\end{equation} 

Next we will treat the term $B(i)$. Expanding the modulus square, exchanging summations and using the bound 
$$ 
H(\mathfrak{n}) \ll \sum\limits_{\substack{\mathfrak{d}|\mathfrak{n}\\ }} 1\ll 2^{\log_q \mathcal{N}(n)}, 
$$ 
we obtain 
\begin{equation} \label{TIIpreliminary} 
B(i) \ll 2^{2(N-i)} \sum\limits_{\substack{n_1, n_2 \in \mathbb{A} \\ q^{(N-i)/2}|u|^{-1}<|\sigma_{1,2}(n_{1})|< q^{(N-i)/2}|u|\\ 
q^{(N-i)/2}|u|^{-1}<|\sigma_{1,2}(n_{2})|< q^{(N-i)/2}|u|}} \, \, |U(n_1,n_2,i)|, 
\end{equation} 
where 
$$ 
U(n_1,n_2,i):=\sum\limits_{\substack{k\in \mathbb{A}\\ |\sigma_{1}(k)|\le L_1(n_1,n_2,k)\\ |\sigma_{2}(k)|\le L_2(n_1,n_2,k)}} \mathcal{E} (nk) 
$$ 
with $n=n_1-n_2$ and 
$$ 
L_j(n_1,n_2,i):=\min\left\{\Delta^{-1} |\sqrt{d} u| q^{(i-N)/2}, |\sqrt{d}|/(\Delta|\sigma_{j}(n_1)|),|\sqrt{d}|/(\Delta|\sigma_{j}(n_2)|)\right\} \quad \mbox{ for } j=1,2. 
$$ 
The sum $U(n_1,n_2,i)$ above can be treated in a similar way as the sum $V_{j_1,j_2}$ defined in \eqref{Vj1j2defi} was treated in section \ref{transformationTI}. (In fact, the calculations become easier because the $\sigma_{1,2}(k)$'s are contained in balls rather than spheres.) Similarly as \eqref{Vj1j2finalbound}, we thus get 
\begin{equation*} 
U(n_1,n_2,i)\ll L_1L_2 \left(\chi_{B(0,L_1^{-1})}( \Vert C(n) \Vert) \chi_{B(0,L_1^{-1} |\sqrt{d}|)} ( \Vert D(n) \Vert ) +\chi_{B(0,L_2^{-1})}( \Vert C(n) \Vert) \chi_{B(0,L_2^{-1} |\sqrt{d}|)} ( \Vert D(n) \Vert ) \right) 
\end{equation*} 
with $L_j=L_j(n_1,n_2,i)$ for $j=1,2$. Writing 
\begin{equation} \label{lidefi} 
l(i):=\Delta |\sqrt{d}u|q^{(N-i)/2}, 
\end{equation} 
we observe that 
$$ 
L_1L_2\le l(i)^{-2} \quad \mbox{and}\quad L_j^{-1}|\sqrt{d}|\le l(i), 
$$ 
where we use the summation conditions on $n_1$ and $n_2$ in \eqref{TIIpreliminary}. It follows that 
$$ 
U(n_1,n_2,i)\ll l(i)^{-2} \chi_{B(0,l(i))}( \Vert C(n) \Vert) \chi_{B(0,l(i))} ( \Vert D(n) \Vert). 
$$ 
Plugging this into \eqref{TIIpreliminary}, we see that 
\begin{equation*} 
B(i) \ll 2^{2(N-i)} \Delta^{-2} q^{i-N}\sum\limits_{\substack{n \in \mathbb{A}\\ |\sigma_{1,2}(n)|< q^{(N-i)/2}|u|}} b_n\cdot \chi_{B(0,l(i))}( \Vert C(n) \Vert) \chi_{B(0,l(i))} ( \Vert D(n) \Vert), 
\end{equation*} 
where 
$$ 
b_n:= \sum\limits_{\substack{n_1, n_2 \in \mathbb{A} \\ n=n_1-n_2\\ q^{(N-i)/2}|u|^{-1}<|\sigma_{1,2}(n_{1})|< q^{(N-i)/2}|u|\\ 
q^{(N-i)/2}|u|^{-1}<|\sigma_{1,2}(n_{2})|< q^{(N-i)/2}|u|}} 1. 
$$ 
Clearly, $b_n\ll q^{N-i}$. Hence, recalling the definitions of $C(a)$ and $D(a)$ in \eqref{CaDadef}, we arrive at the bound 
\begin{equation} \label{8.T2(i)a} 
\begin{split} 
B(i) \ll 4^N\Delta^{-2} 
\sum\limits_{\substack{n \in \mathbb{A}\\ |\sigma_{1,2}(n)|< q^{(N-i)/2}|u|}} 
\chi_{B(0,l(i))}\left( \left| \left| \frac{ x_1\sigma_1(n) - x_2\sigma_2(n)}{2\sqrt{d}} \right| \right| \right) \chi_{B(0,l(i))}\left( \left| \left| \frac{ x_1\sigma_1(n)+x_2\sigma_2(n)}{2} \right| \right| \right). 
\end{split} 
\end{equation} 

\section{Estimation of the type-II sum} 
We notice that the expressions in \eqref{8.T2(i)a} and \eqref{T1(i).25} are of a similar shape. Therefore we can directly apply our method in section 7. In place of \eqref{Estimate 1 of T_1}, we thus get 
\begin{equation} \label{Bound of B} 
B(i) \ll 4^N\Delta^{-2} \left( \frac{l(i)^2}{\Delta^2 W} +1\right) \left(\Tilde{l}(i)^2W+1\right), 
\end{equation} 
where 
\begin{equation} \label{8.L(i)*} 
\Tilde{l}(i):= \left(1+\Delta_1\Delta^{-1}\right)l(i)\ll \left(1+(\Delta W)^{-1}\right)l(i). 
\end{equation} 
Combing \eqref{Sum2.64}, \eqref{Bound of A} and \eqref{Bound of B}, using the definitions of $l(i)$ and $\Tilde{l}(i)$ in \eqref{lidefi} and \eqref{8.L(i)*}, and taking the square root, we deduce that 
\begin{equation} \label{first} 
T_{II}(i) \ll 2^N N\Delta^{-2} \left(W^{-1/2} +q^{(i-N)/2}\right) \left(\Delta q^{(N-i)/2}W^{1/2}+q^{(N-i)/2}W^{-1/2}+1\right). 
\end{equation} 
Reversing the roles of $m$ and $n$ in the whole process of estimating $T_{II}(i)$, we arrive at the same bound with $q^i$ in place of $q^{N-i}$, i.e. 
\begin{equation} \label{second} 
T_{II}(i) \ll 2^NN\Delta^{-2} \left(W^{-1/2} +q^{i/2}\right) \left(\Delta q^{i/2}W^{1/2}+q^{i/2}W^{-1/2}+1\right). 
\end{equation} 
We shall use \eqref{first} if $i\le N/2$ and \eqref{second} if $i> N/2$. Accordingly, we split the sum over $i$ in \eqref{TIIsplit} into two parts, getting 
\begin{equation} \label{TIIsplit2} 
T_{II} \ll T'+T'', 
\end{equation} 
where 
\begin{equation} \label{T'} 
T':=q^{N} \Delta^2 
\sum\limits_{\substack{i\in \mathbb{N}_0 \\ \beta < q^i \le q^{N/2}}} T_{II}(i) 
\end{equation} 
and 
\begin{equation} \label{T''} 
T'':=q^{N} \Delta^2 
\sum\limits_{\substack{i\in \mathbb{N}_0 \\ q^{N/2} < q^i \le q^{N}/\alpha }} T_{II}(i). 
\end{equation} 
Employing \eqref{first}, we have 
\begin{equation} \label{T'esti} 
\begin{split} 
T'\ll & 2^N Nq^N \sum\limits_{\substack{i\in \mathbb{N}_0 \\ \beta < q^i \le q^{N/2}}}\left(W^{-1/2} +q^{(i-N)/2}\right) \left(\Delta q^{(N-i)/2}W^{1/2}+q^{(N-i)/2}W^{-1/2}+1\right) \\ 
\ll & 2^N N q^N \sum\limits_{\substack{i\in \mathbb{N}_0 \\ \beta < q^i \le q^{N/2}}}\left(\Delta q^{N/2-i/2}+q^{N/2-i/2}W^{-1}+W^{-1/2}+\Delta W^{1/2}+q^{i/2-N/2}\right)\\ 
\ll & 2^N N^2 \left(\Delta q^{3N/2}\beta^{-1/2}+q^{3N/2}W^{-1}\beta^{-1/2}+q^NW^{-1/2}+\Delta q^N W^{1/2}+q^{3N/4}\right)\\ 
\ll & 2^N N^2 \left(\delta q^{N}\beta^{-1/2}+q^{3N/2}W^{-1}\beta^{-1/2}+q^NW^{-1/2}+\delta q^{N/2} W^{1/2}+q^{3N/4}\right), 
\end{split} 
\end{equation} 
where for the last line, we have used the relation $\Delta=\delta/q^{N/2}$. 
Similarly, employing \eqref{second}, we have 
\begin{equation} \label{T''esti} 
\begin{split} 
T''\ll & 2^N Nq^N \sum\limits_{\substack{i\in \mathbb{N}_0 \\ q^{N/2} < q^i \le q^{N}/\alpha }}\left(W^{-1/2} +q^{-i/2}\right) \left(\Delta q^{i/2}W^{1/2}+q^{i/2}W^{-1/2}+1\right) \\ 
\ll & 2^N N q^N \sum\limits_{\substack{i\in \mathbb{N}_0 \\ q^{N/2} < q^i \le q^{N}/\alpha}}\left(\Delta q^{i/2}+q^{i/2}W^{-1}+W^{-1/2}+\Delta W^{1/2}+q^{-i/2}\right)\\ 
\ll & 2^N N^2 \left(\Delta q^{3N/2}\alpha^{-1/2}+q^{3N/2}W^{-1}\alpha^{-1/2}+q^NW^{-1/2}+\Delta q^N W^{1/2}+q^{3N/4}\right)\\ 
\ll & 2^N N^2 \left(\delta q^{N}\alpha^{-1/2}+q^{3N/2}W^{-1}\alpha^{-1/2}+q^NW^{-1/2}+\delta q^{N/2} W^{1/2}+q^{3N/4}\right). 
\end{split} 
\end{equation} 
Putting \eqref{TIIsplit2}, \eqref{T'esti} and \eqref{T''esti} together, we get 
\begin{equation} \label{8. T2 final} 
T_{II} \ll 2^N N^2 \left(\delta q^{N}\min\{\alpha,\beta\}^{-1/2}+q^{3N/2}W^{-1}\min\{\alpha,\beta\}^{-1/2}+q^NW^{-1/2}+\delta q^{N/2} W^{1/2}+q^{3N/4}\right). 
\end{equation} 

\section{Proof of Theorem \ref{Main Theorem 2}} 
We recall where the parameter $W$ comes from: In section 7, we simultaneously approximated the pair $(x_1, x_2)$ by a pair $(\sigma_1(\theta),\sigma_2(\theta_2))$ with 
$$ 
\theta = \frac{u+v\sqrt{d}}{f+g\sqrt{d}}\in K, \quad (u+v\sqrt{d},f+g\sqrt{d})\approx 1. 
$$ 
The parameter $W$ is the norm of the denominator above, i.e. $W= \mathcal{N}(f+g\sqrt{d})$. There are infinitely many choices for $\theta$ satisfying our hypotheses, giving rise to an infinite increasing sequence of possible $W$'s. Now we pick such a $W$ and choose $N$ depending on $W$ and the parameter $\delta<1$ as 
$$ 
N:=2\lceil \log_q(\delta W) \rceil 
$$ 
so that 
$$ 
W \asymp \delta^{-1}q^{N/2}. 
$$ 
Under this choice, \eqref{8. T2 final} collapses into 
\begin{equation*} 
T_{II} \ll 2^N N^2 \left(\delta q^{N}\min\{\alpha,\beta\}^{-1/2}+q^{3N/4}\right) 
\end{equation*} 
and \eqref{T_1 final sum} collapses into 
\begin{equation*} \label{9. T1 final2} 
T_I \ll N^2(\alpha\beta +\delta q^{N/2}). 
\end{equation*} 
Fixing 
$$ 
\alpha=\beta:=\delta^{2}q^{N/2} 
$$ 
and recalling \eqref{diffi}, it follows that 
$$ 
\Tilde{\mathcal{T}}(N)-\mathcal{T}(N)\ll 2^N N^2 \left(\delta^4q^{N}+\delta q^{N/2}+q^{3N/4}\right)\ll_{\varepsilon} q^{(\log_q 2+\varepsilon)N}\left(\delta^4q^{N}+\delta q^{N/2}+q^{3N/4}\right). 
$$ 
Hence, we have 
\begin{equation*} 
\tilde{\mathcal{T}}(N)-\mathcal{T}(N) \ll_{\varepsilon} \delta^2 q^{(1-\varepsilon)N} 
\end{equation*} 
if $q>2^{12}$ and 
\begin{equation*} 
q^{-(1/8 -\log_q \sqrt{2}-2\varepsilon)N}\le \delta\le q^{-(\log_q 2 +2\varepsilon)N}. 
\end{equation*} 
This establishes Theorem \ref{Main Theorem 2}. 

\section{Proof of Theorem \ref{main theorem}} 
Recall the definitions of $\mathcal{T}(N)$ and 
$\tilde{\mathcal{T}}(N)$ in 
\eqref{def of T(N)} and \eqref{Def of til(T(N))}. Using the prime ideal theorem for function fields, we have 
$$ 
{\mathcal{T}}(N)\sim \frac{q^N\delta^2}{|\sqrt{d}|}. 
$$ 
Combining this with Theorem \ref{Main Theorem 2}, we deduce that under the assumptions in this theorem, there exists an infinite sequence of positive integers $N$ such that 
$$ 
\tilde{\mathcal{T}}(N)\sim \frac{q^N\delta^2}{|\sqrt{d}|} 
$$ 
whenever $\delta$ satisfies \eqref{deltaassump}. Fix a member $N$ of this sequence and $\delta$ satisfying \eqref{deltaassump}. By a standard argument, the contribution of prime powers $\mathfrak{a}=\mathfrak{p}^k$ with $k\ge 2$ to $\tilde{\mathcal{T}}(N)$ is bounded by $O\left(N^2 q^{N/2}\right)$, where we use the fact that $\omega(\mathfrak{a})$ is bounded. Taking the size of $\delta$ into account, it follows that the remaining contribution satisfies the same asymptotic 
$$ 
\sum\limits_{\substack{ \mathfrak{p} \in \mathcal{P}\\ \mathcal{N}(\mathfrak{p})= q^N }}\Lambda(\mathfrak{p})\omega(\mathfrak{p}) \sim \frac{q^N\delta^2}{|\sqrt{d}|}, 
$$ 
where $\mathcal{P}$ is the set of prime ideals in $\mathbb{A}$. In particular, there exists a prime ideal $\mathfrak{p}$ with norm $\mathcal{N}(\mathfrak{p})=q^N$ such that $\omega(\mathfrak{p})>0$. Using the definition of $\omega(\mathfrak{a})$ in \eqref{Fdef}, we conclude that there exists a prime element $\pi\in \mathbb{A}$ with norm $\mathcal{N}(\pi)=q^N$ such that 
$$ 
\left|x_i-\frac{\sigma_i(p)}{\sigma_i(\pi)}\right| \le \frac{\delta}{q^{N/2}} \quad \mbox{ for } i=1,2. 
$$ 
This implies \eqref{diorequired} upon choosing $\delta$ as small as possible, i.e. 
$$ 
\delta:=q^{-(1/8 -\log_q \sqrt{2}-2\varepsilon)N}, 
$$ 
and changing $2\varepsilon$ into $\varepsilon$. Hence, Theorem \ref{main theorem} is established. 

\section{Simplification of the treatment of real quadratic number fields} 
Our general method in this article is similar to that in \cite{BM}, where the same problem was handled for real quadratic number fields, except for the following two points. Firstly, we here use a version of Vaughan's identity instead of Harman's sieve, which appeared easier to us in this context. Secondly, and more importantly, our counting process in section 7 differs from that in \cite[sections 7 and 8]{BM} and is considerably simpler. In \cite{BM}, the counting problem was interpreted as solving a system of two linear congruences for pairs of integers lying in certain ranges. To get an acceptable bound for the number of solutions, the authors made use of the fact that a certain quantity is a root of a quadratic congruence (see \cite[first congruence in section 8]{BM}) and then approximated the quotient of this root and the modulus by a rational number with smaller denominator. To this end, they used a result of Hooley. This forced them to introduce a notion of ``good" and ``bad" pairs $(x_1,x_2)$ in their paper, where good pairs satisfied a certain Diophantine constraint. Their final result was of comparable strenght as ours for good pairs but weaker for bad pairs. Luckily, as they demonstrated, almost every pair $(x_1,x_2)$ is good. They even gave an explicit construction of good pairs. 

In this article, we handled the counting problem more directly by breaking the summation over $k$ into subsums, where the lattice points $(\sigma_1(k),\sigma_2(k))$ lie in balls of radius $\sqrt{\mathcal{N}(f+g\sqrt{d})}$ (see equation \eqref{broken sum in T_1}). It remained to count these lattice points in the said balls, which was a comparibly easy task. This could have been done in an analogous manner in \cite{BM} as well and would have avoided introducing the notion of good and bad pairs. Here we don't carry out this process again but just state that it leads to the following result for the number field case, which sharpens \cite[Theorem 5]{BM} and is an analogue of our Theorem \ref{main theorem}, formulated in terms of prime ideals. 
Here we note that the additional term $\log_q \sqrt{2}$ in the exponent on the right-hand side of \eqref{diorequired} becomes less than $\varepsilon$ if $q$ is large enough, namely if $q>\exp(\sqrt{2}/\varepsilon)$. 

\begin{theorem} 
Assume that $\mathbb{Q}(\sqrt{d})$ has class number 1, where $d$ is a positive square-free integer satisfying $d \equiv 3 \bmod{4}$. Let $\varepsilon$ be any positive real number. Suppose further that $(x_1, x_2) \in \mathbb{R}^2\setminus \sigma(K)$. Then there exist infinitely many non-zero prime ideals $\mathfrak{p}$ in the ring $\mathcal{O}$ of integers of $\mathbb{Q}(\sqrt{d})$ such that 
$$ 
\left| x_i-\frac{\sigma_i(p)}{\sigma_i(q)}\right| \le \mathcal{N}(\mathfrak{p})^{-1/2-1/8+\varepsilon} \quad \mbox{for } i=1,2 
$$ 
for some generator $q$ of $\mathfrak{p}$ and $p\in \mathcal{O}$. 
\end{theorem}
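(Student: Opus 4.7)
The plan is to run the entire argument of sections 3--9 of the present paper in the number field setting of $K=\mathbb{Q}(\sqrt{d})$, replacing $\mathbb{F}_q[T]$ by $\mathcal{O}$, $k_\infty$ by $\mathbb{R}$, and the $q$-adic metric by the archimedean one. Concretely, for a parameter $X\to\infty$ and $\delta>0$, I would first choose a smooth, compactly supported, non-negative weight $\Omega_\Delta$ on $\mathbb{R}$ (as in \cite{BM}, in place of the sharp indicator used here in \eqref{Omegadef}) with $\Omega_\Delta(0)>0$ and $\hat{\Omega}_\Delta$ rapidly decaying, and define
$$
\omega(\mathfrak{a}):=\sum_{p\in\mathcal{O}}\Omega_{\delta/\sqrt{X}}\!\left(x_1-\frac{\sigma_1(p)}{\sigma_1(v)}\right)\Omega_{\delta/\sqrt{X}}\!\left(x_2-\frac{\sigma_2(p)}{\sigma_2(v)}\right),
$$
with $v$ any generator of $\mathfrak{a}$, then form $\mathcal{T}(X)$ and $\tilde{\mathcal{T}}(X)$ as in \eqref{def of T(N)}--\eqref{Def of til(T(N))}, summing over $\mathfrak{a}$ with $\mathcal{N}(\mathfrak{a})\asymp X$. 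Poisson summation on the lattice $\sigma(\mathfrak{a}^{-1})\subset\mathbb{R}^2$ extracts the $p=0$ main term that matches $\mathcal{T}(X)$ and expresses the remainder as an exponential sum weighted by $\Lambda(\mathfrak{a})$, exactly as in section 4.

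Next I would apply Vaughan's identity for ideals (Lemma \ref{Vaughanideals}, whose number field analogue is standard) to split the difference $\tilde{\mathcal{T}}(X)-\mathcal{T}(X)$ into type-I and type-II sums as in Proposition \ref{Vaughanbreaking}. The type-I sum is handled exactly as in section 6--7 here: Dirichlet's approximation theorem over $K$ (classical for real quadratic number fields, see \cite[Theorem 5.3]{BaMo}) produces infinitely many $\theta=(u+v\sqrt{d})/(f+g\sqrt{d})\in K$ with $|x_i-\sigma_i(\theta)|\le C/W$, where $W=\mathcal{N}(f+g\sqrt{d})$; choosing the generator so that $|\sigma_1(f+g\sqrt{d})|\asymp|\sigma_2(f+g\sqrt{d})|\asymp\sqrt{W}$, one partitions the $k$-summation into subsums where $(\sigma_1(k),\sigma_2(k))$ lies in a box of side $\sqrt{W}$, uses the injectivity of $k(u+v\sqrt{d})\bmod(f+g\sqrt{d})$ on each box as in the argument preceding \eqref{SigmaA1A2esti2}, and counts lattice points of $\Lambda(f+g\sqrt{d})$ in $B_2(0,|\sqrt{d}|\tilde{R})$, yielding $W\tilde{R}^2+1$. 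The type-II sum is treated by Cauchy--Schwarz, expansion of the modulus squared, and the very same lattice-counting bound, replicating section 8.

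Finally I would optimize by taking $W\asymp\delta^{-1}\sqrt{X}$ (choosing $X$ of a particular size dictated by the available $W$), $\alpha=\beta=\delta^2\sqrt{X}$, and $\delta=X^{-1/8+\varepsilon}$, to get $\tilde{\mathcal{T}}(X)-\mathcal{T}(X)=o(\mathcal{T}(X))$. The prime ideal theorem for $\mathbb{Q}(\sqrt{d})$ gives $\mathcal{T}(X)\sim c_d\delta^2 X$; the standard bound $O(\sqrt{X}(\log X)^2)$ on the contribution of prime power ideals $\mathfrak{p}^k$ with $k\ge 2$ is negligible. Thus some prime ideal $\mathfrak{p}$ with $\mathcal{N}(\mathfrak{p})\asymp X$ has $\omega(\mathfrak{p})>0$, yielding the desired approximations; letting $X\to\infty$ along the sequence of denominators produced by Dirichlet's theorem gives infinitely many such $\mathfrak{p}$. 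The key point over \cite{BM} is that the ball partition already suffices for the counting, so the ``good pair" Diophantine hypothesis on $(x_1,x_2)$ is never needed; the $\log_q\sqrt{2}$ loss of the function field case likewise does not appear because there is no $2^N$-type loss from $H(\mathfrak{n})$ in the archimedean Cauchy--Schwarz step (one can absorb the divisor-function bound into $X^\varepsilon$).

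The main obstacle is the archimedean Fourier analysis: the sharp cutoff $\Omega_\Delta$ used here has a non-decaying Fourier transform on $\mathbb{R}$, so one must work with a smooth bump and carry the Schwartz decay through the Poisson summation, the type-I transformation of $V_{j_1,j_2}$, and the type-II expansion, truncating frequencies at polylog thresholds. A second, more bookkeeping-level obstacle is handling the infinite unit group of $\mathcal{O}$: one fixes a fundamental unit $u$ and replaces $S(\mathfrak{a})$ in \eqref{cardS} by the analogous fundamental domain $\{v\in\mathcal{O}:(v)=\mathfrak{a},\;\sqrt{\mathcal{N}(\mathfrak{a})}|u|^{-1/2}<|v|\le\sqrt{\mathcal{N}(\mathfrak{a})}|u|^{1/2}\}$, which has cardinality $\sharp\mathcal{O}^\times_{\mathrm{tors}}=2$ (since $d\equiv 3\bmod 4$), playing the role of $q-1$ in \eqref{Sidentity}. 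Neither obstacle is serious: both are handled in \cite{BM}, and our sole new ingredient---the direct lattice-point counting of section 7---goes through verbatim.
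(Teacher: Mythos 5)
Your proposal matches the paper's own (deliberately un-carried-out) sketch: the paper explicitly states after Section 11 that it will not repeat the derivation, only that the simplified lattice-point counting of Section 7, transported to the number-field setting, removes the ``good pair'' hypothesis of \cite{BM} and yields this theorem. You identify the same two ingredients the paper singles out --- Vaughan's identity plus the direct box/lattice-point counting in place of the Hooley-type quadratic-congruence argument of \cite{BM} --- and you correctly note that the archimedean divisor bound $\tau(\mathfrak{n})\ll_\varepsilon\mathcal{N}(\mathfrak{n})^\varepsilon$ eliminates the $\log_q\sqrt{2}$ loss, so the exponent becomes cleanly $1/2+1/8-\varepsilon$, and that the prime-ideal theorem supplies the main term. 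One small inaccuracy (harmless to the argument): the torsion of $\mathcal{O}^\times$ is $\{\pm1\}$ for \emph{every} real quadratic field, not just for $d\equiv 3\bmod 4$; that congruence condition is there, as in \cite{BM}, to guarantee $\mathcal{O}=\mathbb{Z}[\sqrt{d}]$ so that the lattice $\Lambda(f+g\sqrt{d})$ and the real/imaginary-part decomposition take the same clean form as in the function-field case. Otherwise the plan is sound and agrees with what the authors intend.
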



\begin{thebibliography}{20} 
\bibitem{BM} 
S. Baier, D. Mazumder, 
\newblock \emph{Diophantine approximation with prime restriction 
in real quadratic number fields}, 
\newblock Math. Z. 299, No. 1-2, 699--750 (2021). 

\bibitem{BMT} 
S. Baier, D. Mazumder, M. Technau, 
\newblock \emph{On the distribution of $\alpha p$ modulo one in quadratic number fields}, 
\newblock Unif. Distrib. Theory 16, No. 2, 1--48 (2021). 

\bibitem{BaMo} S. Baier, E.A. Molla, G. Arijit, {\it Diophantine approximation with prime restriction in function fields}, J. Number Theory, Volume 241, 57--90 (2022).

\bibitem{BaierSingh} 
S. Baier, R.K.Singh, 
\newblock \emph{The large sieve for square moduli in function fields}, J. Number Theory, Volume 235, Pages 420--464 (2022). 

\bibitem{BT} 
S.~{Baier}, M. Technau, 
\newblock \emph{On the distribution of 
$\alpha p$ modulo one in imaginary quadratic number fields with class number one}, 
\newblock {J. Théor. Nombres Bordx. 
32}, No. 3, 719--760 (2020). 

\bibitem{CaFr} J.W.S. Cassels, A. Fr\"ohlich, {\it Algebraic Number Theory}, Academic Press (1967). 

\bibitem{Fri} C. Friesen, P. van Wamelen, \textit{Class numbers of real quadratic function fields}, Acta Arith. 
Journal Profile 
81, No. 1, 45--55 (1997). 

\bibitem{harman1996on-the-distribu} 
G.~{Harman}, 
\newblock {On the distribution of $\alpha p$ modulo one. II}, 
\newblock \emph{Proc.\ London Math.\ Soc.~(3)}, 72\penalty0 (2):\penalty0 241--260 (1996). 

\bibitem{Harman2019} G. Harman, 
{\it Diophantine approximation with Gaussian primes},
Q. J. Math. 70, No. 4, 1505--1519 (2019). 

\bibitem{IwKo} E. Kowalski, H. Iwaniec, {\it Analytic number theory}, Colloquium Publications. American Mathematical Society Series Profile 
53. Providence, RI: American Mathematical Society (AMS) (2004). 

\bibitem{Ros} M. Rosen, {\it 
Number theory in function fields}, Graduate Texts in Mathematics 210. New York, NY: Springer (2002). 

\end{thebibliography}
\end{document}